\newtheorem{theoremintro}{Theorem}
\numberwithin{equation}{subsection}
\newtheorem{theorem}{Theorem}[section]
\newtheorem{lemma}[theorem]{Lemma}
\newtheorem{proposition}[theorem]{Proposition}
\newtheorem{corollary}[theorem]{Corollary}
\theoremstyle{definition}
\newtheorem{remark}[theorem]{Remark}
\DeclareMathOperator{\orth}{O}
\DeclareMathOperator{\po}{PO}
\DeclareMathOperator{\Lie}{Lie}
\def\good{{\rm good}}
\def\stable{{\rm st}}
\def\semistable{{\rm ss}}
\def\ball{{\mathbb B}}
\newcommand{\stk}[1]{{\mathcal #1}}
\newcommand{\tilstk}[1]{{\til{\stk #1}}}
\newcommand{\barstk}[1]{{\bar{\stk #1}}}
\newcommand{\coarse}[1]{{\underline{#1}}}
\def\marked{{\rm m}}
\def\weyl{{\mathbb W}}
\def\setcomp{\smallsetminus}
\def\cx{{\mathbb C}}
\def\ff{{\mathbb F}}
\def\proj{{\mathbb P}}
\def\rat{{\mathbb Q}}
\def\integ{{\mathbb Z}}
\def\zz{{\bm 0}}
\def\idp{{\frak p}}
\def\call{{\mathcal L}}
\def\calz{{\mathcal Z}}
\def\cala{{\mathcal A}}
\def\calf{{\mathcal F}}
\def\caln{{\mathcal N}}
\def\calo{{\mathcal O}}
\def\calt{{\mathcal T}}
\def\calx{{\mathcal X}}
\def\calp{{\mathcal P}}
\def\calm{{\mathcal M}}
\def\caly{{\mathcal Y}}
\newcommand{\til}[1]{{\widetilde{#1}}}
\newcommand{\st}[1]{\{#1\}}
\newcommand{\ang}[1]{{{\langle #1 \rangle}}}
\newcommand{\sra}[1]{\stackrel{#1}{\rightarrow}}
\newcommand{\rest}[1]{|_{#1}}
\DeclareMathOperator{\id}{id}
\DeclareMathOperator{\aut}{Aut}
\DeclareMathOperator{\gal}{Gal}
\DeclareMathOperator{\SL}{SL}
\DeclareMathOperator{\End}{End}
\DeclareMathOperator{\tr}{tr}
\DeclareMathOperator{\fil}{F}
\DeclareMathOperator{\pic}{Pic}
\DeclareMathOperator{\Frac}{Frac}
\def\ra{\rightarrow}
\def\tensor{\otimes}
\def\iso{\cong}
\def\cross{\times}
\def\inject{\hookrightarrow}
\def\bs{\backslash}
\def\sriso{\stackrel{\sim}{\rightarrow}}
\def\sep{{{\rm sep}}}
\def\derham{{\rm dR}}
\def\cris{{\rm cris}}
\def\units{^\cross}
\DeclareMathOperator{\spec}{Spec}
\def\comp{\circ}
\def\inv{^{-1}}
\def\twiddle{\sim}
\newenvironment{alphabetize}{\begin{enumerate}

}{\end{enumerate}}
\begin{document}

\title{Arithmetic Torelli maps for cubic surfaces and threefolds}

\author{Jeffrey D. Achter}
\email{j.achter@colostate.edu}
\address{Department of Mathematics, Colorado State University, Fort
Collins, CO 80523} 
\urladdr{http://www.math.colostate.edu/~achter}

\thanks{This work was partially supported by a grant from the Simons
  foundation (204164).}
\subjclass[2010]{Primary 14J10; Secondary 11G18, 14H40, 14K30}

\begin{abstract}
  It has long been known that to a complex cubic surface or threefold
  one can canonically associate a principally polarized abelian
  variety.  We give a construction which works for cubics over an
  arithmetic base, and in particular identifies the moduli space of
  cubic surfaces with an open substack of a certain moduli space of
  abelian varieties.  This answers, away from the prime $2$, an old
  question of Deligne and a recent question of Kudla and Rapoport.
\end{abstract}

\maketitle

\section{Introduction}

Consider a complex cubic surface.  By associating to it either
a K3 surface \cite{dolgachevetal05} or a cubic threefold
\cite{actcras,act02}, one can construct a polarized Hodge
structure.  The Hodge structures which arise are
parametrized by $\ball^4$, the complex $4$-ball, and in this way
one can show that the moduli space $\stk S_\cx$ of complex cubic
surfaces is uniformized by $\ball^4$.

It turns out that the relevant arithmetic quotient of $\ball^4$ is the
set of complex points of $\stk M$, the moduli space of abelian
fivefolds with action by $\integ[\zeta_3]$ of signature $(4,1)$, and
there is a Torelli map $\tau_\cx:\stk S_\cx \ra \stk M_\cx$.  The main
goal of the present paper is to construct an arithmetic Torelli map
for cubic surfaces:

\begin{theoremintro}
\label{tha}
There is an open immersion
\begin{diagram}
\stk S & \rto & \stk M
\end{diagram}
of stacks over $\integ[\zeta_3,1/6]$ which specializes to $\tau_\cx$.
\end{theoremintro}
We refer to Proposition \ref{propperiodsurface}(b) and Theorem
\ref{thbigdiagram} for more details.  In particular, this immersion
extends to a homeomorphism if $\stk S$ is replaced with its partial
compactification $\stk S_\stable$, the moduli stack of stable cubic
surfaces.   Theorem \ref{tha} shows that $\stk S_\stable$ is
essentially a Shimura variety, and thus supports unexpected arithmetic
structures, such as Hecke 
correspondences and modular forms.  

It turns out that a key step in understanding the Torelli map for
cubic surfaces is to show
that there is an arithmetic construction which recovers the
intermediate Jacobian of a complex cubic threefold.  Let $\tilstk T
\subset H^0(\proj^4, \calo_{\proj^4}(3))$ be the scheme of smooth cubic
forms in five variables, and let $\jmath_\cx:\tilstk T_\cx \ra
\cala_{5,\cx}$ be the intermediate Jacobian functor.

\begin{theoremintro}
\label{thb}
There is a morphism 
\begin{diagram}
\tilstk T & \rto & \cala_5
\end{diagram}
of stacks over $\integ[1/2]$ which specializes to $\jmath_\cx$.
\end{theoremintro}
(See Theorem \ref{thbuildprym} and Corollary \ref{corperiodthreefold} below.)

Both of these results are statements about arithmetic period maps, but
with slightly different antecedents.   Theorems \ref{thb} and
\ref{tha} provide special cases of conjectures of, respectively,
Deligne on algebraic period maps and of Kudla and Rapoport on occult
period maps.  

\subsection{Algebraic period maps}

It is a classical insight that
one can study a complex variety by examining its periods. For
a Riemann surface, the cokernel of the period lattice is not just a 
complex analytic torus but an algebraizable one.  Constructing this 
abelian variety through algebro-geometric, as opposed to
transcendental, means was a central concern of abstract algebraic
geometry.  More recently, although still classically, one understood
how to make sense of this construction in families.  (See
\cite{kleimansurvey} for a comprehensive survey.)

Similarly, Clemens and Griffiths \cite{clemensgriffiths} show that a
complex cubic threefold is characterized by its middle cohomology.
The intermediate Jacobian of such a variety
is a principally polarized abelian fivefold, and Clemens and Griffiths
prove a global Torelli theorem.  Mumford gives an algebro-geometric
construction (via Prym varieties) of the intermediate Jacobian, and
thus reveals the essential algebraic nature of this {\em a priori}
transcendental object.  Beauville and Murre extend these ideas to
cubic threefolds over algebraically closed fields of (almost)
arbitrary characteristic.  Theorem \ref{thb} states that these
constructions make sense not just pointwise but for families over a
general base.  Thus, it provides a new example of an algebraic
period map.  (In fairness, one should note that the essential
algebraization step is provided by the existence of Picard schemes of
relative curves.)


In \cite{deligneniveau}, Deligne studies the period map, via
intermediate Jacobians, for smooth complete intersections of Hodge
level one.  He proves that the universal intermediate Jacobian
attached to the complex fiber descends to $\rat$, and conjectures that
it spread out to $\integ$.  Theorem \ref{thb} is apparently the first
nontrivial case of Deligne's conjecture (away from the prime $2$).  In
joint work with Casalaina-Martin, the author is currently
investigating the extent to which the techniques of the present work
allow the resolution of other cases of Deligne's conjecture.  (The key
point is that a cubic threefold is, after suitable blowup, a quadric
fibration; examination of the list \cite{rapoportniveau} of complete
intersections of Hodge level one indicates that similar progress may
be made in other cases.)

\subsection{Occult period maps}
\label{subsecoccult}
The moduli space of stable complex cubic surfaces is an arithmetic
quotient of the complex $4$-ball.  Thanks to work of, e.g., Allcock,
Carlson and Toledo \cite{act3folds}, Looijenga and Swierstra
\cite{looijengaswierstra}, Kond{\=o} and
Dolgachev and Kond{\=o} \cite{dolgachevkondo,kondog4}, several other examples are known of
apparently accidental isomorphisms between certain moduli spaces of
complex varieties and (open subsets of) arithmetic quotients of complex
unit balls.  As part of their work on unitary Shimura varieties
\cite{kudlarapoportii}, Kudla and Rapoport survey such occult period maps
(``occult'', because their construction relies not on the periods of
the varieties in question, but on other, auxiliary varieties)
\cite{kroccult}, and show that they descend to certain cyclotomic
number fields $\rat(\zeta_n)$.  They conjecture \cite[Conj.\ 15.9]{kudlarapoportii} that each
such descends to $\integ[\zeta_n,1/n]$.  Thus (away from $2$), Theorem
\ref{tha} secures one case of the conjecture of Kudla  
and Rapoport.  The construction given here depends on the fact that
one knows how to construct the Picard scheme of a relative curve; the
algebraicity of $\tau$ comes from that of the classical Torelli map
for curves. 

\subsection{Alternate approaches}

Like \cite{act02}, the present work constructs the occult
period map $\tau$ by associating, to each cubic surface, the cyclic
cubic threefold ramified along that surface, and then constructing an
associated abelian variety.

There are other methods available for demonstrating an exceptional
isomorphism between complex moduli spaces and ball quotients, and they
may well yield novel approaches to and extensions of Theorem \ref{tha}.

In \cite{delignemostow}, the authors show that certain moduli spaces
of weighted points in $\proj^1$ are ball quotients, by using the
period map for cyclic covers of the projective line.  Some of these
results have been recovered by instead considering periods of K3
surfaces.  Many of the occult period maps reviewed in \cite{kroccult},
and indeed Dolgachev, van Geemen and Kond{\=o}'s
construction of $\tau_\cx$, go through a construction involving K3
surfaces.  Recent work on arithmetic period maps for K3 surfaces
may well allow progress on the problem of arithmetic occult periods;
the author hopes to return to this subject shortly.

\subsection{Outline}

After establishing notation in Section \ref{secmoduli}, in Section
\ref{secprym} we construct a period morphism $\til\varpi:\tilstk T
\ra \stk A_5$ from the space of smooth cubic forms in five variables
to the moduli space of abelian fivefolds.  This is used in Section
\ref{secsurfaces} to define the morphism of
$\integ[\zeta_3,1/6]$-stacks $\tau:\stk S \ra \stk M$.   (The reader
is invited to consult diagram \ref{diagbig} to see how these morphisms
fit together.)
In Section
\ref{secimage}, we characterize the image of $\tau$ and ultimately
extend the domain of $\tau$ to the moduli space of semistable cubic
surfaces.  

The existence of an open immersion $\stk S \inject \stk M$ over
$\integ[\zeta_3,1/6]$ means that information about the arithmetic and
geometry of abelian varieties can be transported to the setting of
cubic surfaces.  In a companion work \cite{achtercubmono}, the
author exploits this 
connection in order to classify the abelian varieties which arise as
intermediate Jacobians of cubic threefolds attached to cubic surfaces.

\subsection*{Acknowledgments}
It's a pleasure to thank, especially, M.\  Rapoport and D.\ Toledo for
their encouragement; S.\ Casalaina-Martin for helpful conversations,
particularly concerning the boundary of the moduli space of cubic
threefolds; and the referee for insightful suggestions.

\section{Moduli spaces}
\label{secmoduli}

Let $\calo_E$ be the ring of Eisenstein integers $\integ[\zeta_3]$,
and let $E = \rat(\zeta_3)$.
In Sections \ref{subsecmodcubics} and
\ref{secprym}, all constructions are over $\integ[1/2]$; elsewhere,
all spaces are objects over $\calo_E[1/6]$.  The present section
establishes notation concerning moduli spaces of cubic surfaces and
threefolds (\ref{subsecmodcubics}) and  abelian varieties (\ref{subsecmodabvars}).

\subsection{Cubics}
\label{subsecmodcubics}
Let $\tilstk S \subset H^0(\proj^3, \calo_{\proj^3}(3))\ra \spec
\integ$ be the space 
of smooth cubic forms in four variables.  Similarly, let $\tilstk
S_\stable$ and $\tilstk S_\semistable$ be the spaces of stable and
semistable cubic forms, respectively, in the sense of GIT \cite[Sec.\
4.2]{git}.   The complement $\tilstk S_\stable \setcomp \tilstk S$
is a horizontal divisor,  i.e.,
for each $t \in\spec \integ$, the fiber $(\tilstk
S_\stable\setcomp \tilstk S)_t$ has codimension one in $\tilstk
S_\stable$. 
Moreover, $\tilstk S_\semistable \setcomp \tilstk S_\stable$ forms a single orbit under
the action of $\SL_4$.

Let $\stk S = \tilstk S / \SL_4$ be the moduli stack of smooth projective
cubic surfaces, and similarly define $\stk S_\stable$ and $\stk
S_\semistable$.  A cubic surface is stable if it is either smooth or
has ordinary double points of type $A_1$ \cite[1.14]{mumfordens}.

The coarse moduli space $\coarse S_\semistable$ of
$\stk S_\semistable$ 
is a normal projective scheme.  Let $g:\caly\ra \stk S$ be the tautological cubic
surface over $\stk S$.  We will slightly abuse notation and also let
$\caly \ra \tilstk S$ be the universal cubic surface over $\tilstk S$,
thought of as a subscheme of $\proj^3_{\tilstk S}$.

We follow \cite[Sec.\ 2]{matsumototerasoma} for the notion of marked
cubic surface used here.  More precisely, we give a definition of
marked cubic surfaces which, in the special case of complex
cubic surfaces, coincides with the definition in
\cite{matsumototerasoma}.

Recall that if $Y/K$ is a cubic surface over an algebraically closed
field, then there are $27$ lines on $Y$.  Let $\Lambda(Y)$ be the
simple graph with a vertex $[L]$ for each line $L$ on $Y$, where $[L]$
and $[L']$ are adjacent if and only if $L$ and $L'$ have nontrivial
intersection.

It is known that $\Lambda(Y)$ is isomorphic to the abstract graph
$\Lambda_0$ defined as follows.  For $1 \le i \le 6$, there are
vertices $e_i$ and $c_i$, and for $1 \le j < k \le 6$ there is a
vertex $\ell_{jk}$.  Then $e_i$ and $c_j$ are adjacent if $i\not = j$;
each of $e_i$ and $c_i$ is adjacent to $\ell_{jk}$ if $i \in \st{j,k}$;
$\ell_{ij}$ and $\ell_{km}$ are adjacent if $\st{i,j}\cap \st{k,m} =
\emptyset$; and these are the only adjacencies in $\Lambda_0$.

A marking of $Y$ is a graph isomorphism $\Psi:\Lambda(Y) \ra
\Lambda_0$.  We briefly indicate how such a marking may be
constructed.  Realize $Y$ as the blowup $Y \ra \proj^2$ of the
projective plane at $6$ points $P_1, \cdots, P_6$.  Let $E_i$ be the
inverse image of $P_i$; for $1 \le i < j \le 6$ let $L_{ij}$ be the
strict transform of the line connecting $P_i$ and $P_j$; and let
$C_{i}$ be the strict transform of the conic through $\st{P_1, \cdots,
  P_6} \setcomp P_i$.  Then the rule $[E_i] \mapsto e_i$, $[L_{ij}]
\mapsto \ell_{ij}$, $[C_i] \mapsto c_i$ is a marking of $Y$.

The automorphism group of $\Lambda_0$ is $\weyl = W({\rm E}_6)$, the
Weyl group of the exceptional root system ${\rm E}_6$, and thus the set of
markings on a fixed cubic surface is a torsor under $\weyl$.
Moreover, an automorphism of a surface which fixes a marking is
necessarily the identity \cite[Prop.\ 1.1]{naruki}.  So, let $\stk S^\marked$ be the moduli
space of marked cubic surfaces $(Y,\Psi)$.  Then $\stk S^\marked$ is
(represented by) a smooth quasiprojective scheme, and under the
forgetful map we have $\stk S = \stk S^\marked/\weyl$.

Let $\stk S_\stable^\marked$ be the normalization of $\stk S_\stable$
in $\stk S^\marked$, and similarly define $\stk
S_\semistable^\marked$.  (In \cite{act02}, the complex fiber $\stk
S^\marked_{\stable,\cx}$ is constructed as a certain Fox completion; but
these two notions coincide \cite[8.1]{delignemostow}.) Then $\stk
S^\marked_\semistable$ is a normal projective scheme.

Finally, let $\tilstk T \subset H^0(\proj^4, \calo_{\proj^4}(3))$ be
the space of smooth cubic forms in five variables, and let $\stk T =
\tilstk T/\SL_5$ be the space of smooth projective cubic threefolds.  Let $h:\calz\ra
\stk T$ denote both the tautological cubic threefold over $\stk T$ and
its pullback to $\tilstk T$.  For the sake of completeness, we record
the fact that a cubic threefold is stable if and only if each of its
singularities has type $A_j$ with $j \le 4$.  (This is \cite[Thm.\
1.1]{allcock3folds}.  While the result there is only claimed for
complex threefolds, the proof relies only on the Hilbert-Mumford
stability criterion \cite[Thm.\ 2.1]{mumfordgit}, which is valid in
all characteristics \cite[Thm.\ 2.2]{seshadri72}.)

Taken together, we have parameter spaces $\tilstk S$ and $\tilstk T$
for smooth cubic forms in four and five variables, respectively.  Each
of these is an open subscheme of a projective space over $\spec
\integ$, and in particular Noetherian and smooth.  Taking GIT
quotients, we have smooth Deligne-Mumford stacks $\stk S$ and $\stk T$
over $\spec \integ$.  In the sequel we will consider these as objects
over $\spec \calo_E[1/6]$ and $\spec \integ[1/2]$, respectively,
and in particular set aside the question of how to mark a cubic surface in
characteristic three.

\subsection{Abelian varieties}
\label{subsecmodabvars}
We work in the category of $\calo_E[1/6]$-schemes.  In particular,
each scheme $S$ comes equipped with a morphism $\iota: \calo_E \ra
\calo_S$.  Let $\bar\iota: \calo_E \ra \calo_S$ be the composition of
the involution of $\calo_E$ and the morphism $\iota$.  Then
$\calo_S\tensor \calo_E \iso \calo_S\oplus \calo_S$, where $\calo_E$
acts on the first copy of $\calo_S$ via $\iota$ and on the second by
$\bar\iota$.  Under this decomposition, any locally free sheaf $\calf$
of $\calo_S\tensor \calo_E$-modules is a direct sum of two locally
free $\calo_S$-modules of ranks $r$ and $s$.  The pair $(r,s)$ is
called the signature of $\calf$.

Let $\stk M = \stk M_{(4,1)}$ be the moduli stack of principally
polarized abelian fivefolds equipped with an $\calo_E$-action of
signature $(4,1)$.  More precisely, $\stk M(S)$ consists of triples
$(X,\iota,\lambda)$, where $X \ra S$ is an abelian scheme of relative
dimension five; $\iota: \calo_E \ra \End_S(X)$ is an action of
$\calo_E$ such that $\Lie(X)$ has signature $(4,1)$; and $\lambda$ is
a principal polarization compatible with the action of $\calo_E$.  For
each prime $\ell$ invertible on $S$, the principal polarization
$\lambda$ induces a skew-symmetric unimodular form on the $\ell$-adic
Tate module, $\ang{\cdot,\cdot}_\lambda: T_\ell X \cross T_\ell X \ra
\integ_\ell(1)$.  The compatibility of $\lambda$ and $\iota$ may be
expressed by insisting that for $x,y \in T_\ell X$ and $a \in
\calo_E$, $\ang{\iota(a)x,y}_\lambda = \ang{x,
  \bar\iota(a)y}_\lambda$.  The forgetful morphism $\jmath:\stk M \ra
\stk A_5$ is an embedding, since an abelian fivefold admits at most
one $\integ[\zeta_3]$-action of signature $(4,1)$.  Let $f:\calx\ra
\stk A_5$ denote
both the tautological abelian scheme over $\stk A_5$ and its pullback
via $\jmath$ to $\stk M$.

For our construction of the Torelli map, we will need the notion of a
certain kind of level structure on an $\calo_E$-abelian variety.  For
a rational prime $\ell$, let $\calo_{E,\ell} = \calo_E \tensor
\integ_\ell$; then $\calo_{E,\ell}$ is a degree two
$\integ_\ell$-algebra with involution, and $T_\ell X$ is free of rank
five over $\calo_{E,\ell}$.  From general theory (e.g.,
\cite[2.3]{kudlarapoportii}), there exists a unique unimodular
$\calo_{E,\ell}$-Hermitian form $h_\lambda: T_\ell X \cross T_\ell X
\ra \calo_{E,\ell}(1)$ on $T_\ell X$ such that, for $x,y \in T_\ell
X$, $\ang{x,y}_\lambda =
\tr_{\calo_{E,\ell}/\integ_\ell}(h_\lambda(x,y)/\sqrt{-3})$.

Apply these considerations in the special case $\ell = 3$, and reduce
$h_\lambda$ modulo $(1-\zeta_3)$.  Note that $\calo_{E,3}/(1-\zeta_3)
\iso \calo_E/(1-\zeta_3) \iso \ff_3$; that $T_\ell X \tensor
\calo_{E,\ell}/(1-\zeta_3)$ is a five-dimensional vector space over
the residue field $\ff_3$; and that the choice of $\zeta_3$ yields an
isomorphism $(\calo_{E,\ell}/(1-\zeta_3))(1) \iso \ff_3$.  Suppose
that $X[1-\zeta_3]$, the $(1-\zeta_3)$-torsion subscheme of $X$, is
split over $S$, so that $X[1-\zeta_3](S) \iso
\ff_3^{\oplus 5}$.  (For an arbitrary $\calo_E$-abelian scheme, this
will only happen after \'etale extension of the base.)  Then $\lambda$
induces a perfect pairing 
\begin{diagram}
X[1-\zeta_3](S) \cross
X[1-\zeta_3](S) &\rto^{\bar h_\lambda}& \ff_3
.
\end{diagram}
Since the involution of $\calo_E$ is
trivial modulo $(1-\zeta_3)$, this Hermitian pairing is actually an
orthogonal form on $X[1-\zeta_3](S)$.

Let $(V_0, q_0)$ be a five-dimensional vector space over $\ff_3$
equipped with a perfect orthogonal form $q_0$.  A $(1-\zeta_3)$-level
structure on $(X,\iota,\lambda)$ is an isomorphism $\Phi:(V_0,q_0) \ra
(X[1-\zeta_3](S), \bar h_\lambda)$ of quadratic spaces; equivalently,
it is a choice of basis on $X[1-\zeta_3](S)$ which is orthonormal for
$\bar h_\lambda$.

Let $\stk M^{(1-\zeta_3)}$ be the moduli stack of principally polarized
$\calo_E$-abelian varieties with level $(1-\zeta_3)$ structure, i.e., of
quadruples $(X,\iota,\lambda,\Phi)$.  The usual Serre lemma shows that
any automorphism of $(X,\iota,\lambda)$ which fixes $\Phi$ is
necessarily the identity automorphism, so $\stk M^{(1-\zeta_3)}$ is a
smooth quasiprojective scheme.

The orthogonal group $\orth(V_0,q_0)$ acts on the set of
$(1-\zeta_3)$-level structures on $(X,\iota,\lambda)$.  Moreover, the
center of $\orth(V_0,q_0)$ acts trivially on such level structures
(e.g., \cite[Rem.\ 4.2]{matsumototerasoma}), so that the forgetful map
$\stk M^{(1-\zeta_3)} \ra \stk M$ has covering group
$\orth(V_0,q_0)/\st{\pm 1} = \po(V_0,q_0)$.  From the description of
$\weyl$ as the automorphism group of the Euclidean lattice ${\rm E}_6$, one
can extract an inclusion $\weyl \ra \orth(V_0,q_0)$ such that the
composition $\weyl \inject \orth(V_0,q_0) \ra \po(V_0,q_0)$ is an
isomorphism.

Let $\barstk M$ be the minimal (also known as the Satake, or
Baily-Borel) compactification of $\stk M$, and let 
$\barstk M^{(1-\zeta_3)}$ be the normalization of $\barstk M$ in $\stk
M^{(1-\zeta_3)}$.  The coarse moduli space $\coarse{\bar M}$ of
$\barstk M$ is a normal projective scheme, and $\barstk
M^{(1-\zeta_3)}$ is itself represented by a normal projective scheme.

\subsection{A remark on characteristic}

As is hopefully clear from the introduction, much of the work of the
present paper is to show that transcendental facts over $\cx$ have
their origins in algebraic facts over an arithmetic base.

In Section \ref{secprym}, we already find it necessary to invert the
prime $2$ in order to make an algebraic version of the  ``intermediate
Jacobian'' functor for cubic threefolds.  Initially, this is because
we make heavy use of results from \cite{beauville77} and
\cite{murre72a} on quadric fibrations, especially those coming from
cubic threefolds.  These results are valid over an arbitrary
algebraically closed field whose characteristic is not $2$.
Even if one were able to circumvent this difficulty, it is still not
clear that one can carry out the Prym construction in even
characteristic.   Poincar\'e reducibility is apparently not known for
arbitrary abelian schemes.  Thus, the invertibility of $2$ is used in
an essential way in Lemma \ref{lemprym}.

It is perhaps less surprising that, starting in Section
\ref{secsurfaces}, we find it expedient to invert $3$.  The initial
construction starts with a certain cyclic cover of degree $3$, and
then proceeds through a comparison involving abelian schemes with
$\integ[\zeta_3]$-action and level structure at $3$.  Each of these
notions makes sense in characteristic $3$, but is considerably more
delicate.

\section{Cubic threefolds}
\label{secprym}

After recalling the Prym construction of the intermediate Jacobian of
a cubic threefold over an algebraically closed field (Section
\ref{subsecprymalgclosed}), we use a descent argument to show that this
construction works for a cubic threefold over an arbitrary field
(Section \ref{subsecprymfield}), and even over an arbitrary normal
Noetherian base scheme (Section \ref{subsecprymscheme}). 
In
particular, we deduce (Corollary \ref{corperiodthreefold}) the
existence of an arithmetic Torelli map $\varpi: \tilstk T \ra \stk
A_5$.  As discussed below, this proves a special case of
\cite[3.3]{deligneniveau}.   
The main construction of this section requires a theory of relative
Prym schemes, which is worked out in Section \ref{subsecprym}

In this section, we work in the category of schemes over
$\integ[1/2]$.  In particular, we will only consider fields whose
characteristic is either zero or odd.

\subsection{Algebraically closed fields}
\label{subsecprymalgclosed}

Let $Z/\cx$ be a cubic threefold.  One can associate to $Z$ the
intermediate Jacobian $J(Z) = \fil^2 H^3(Z,\cx)\bs
H^3(Z,\cx)/H^3(Z,\integ)$, a principally polarized abelian fivefold.
The map $\til \calt(\cx) \ra \cala_{5}(\cx)$ comes from a complex
analytic map $\til \calt_\cx \ra \cala_{5,\cx}$.  

Alternatively,
$J(Z)$ can be constructed without recourse to transcendental methods,
using a Prym construction.  Briefly, let $L \subset Z$ be a suitably
generic line (in the sense of, say, \cite[Prop.\ 1.25]{murre72a}).
Projection to the space of planes through $L$ gives $Z^*$, the
blowup of $Z$ along $L$, a structure of fibration $\varpi_L: Z^* \ra \proj^2$. The fibers of
$\varpi_L$ are conics, which are smooth outside the discriminant locus
$\Delta_L \subset \proj^2$.  Direct calculation (using the genericity of
$L$) shows that $\Delta_L$
is a smooth, irreducible curve of genus $6$.  Moreover, inside the
Fano surface $F_Z$ of lines on $Z$ there is an \'etale double cover
$\til\Delta_L$ of $\Delta_L$, necessarily of genus $11$.  There is a
norm map $\caln:\pic^0(\til \Delta_L) \ra \pic^0(\Delta_L)$ of
principally polarized abelian varieties.  Let $P_L$ be the connected component of identity of the 
kernel of $\caln$.  By Mumford's theory of the Prym variety, $P_L$ is
an abelian variety which inherits a canonical principal polarization
from that of $\pic^0(\til \Delta_L)$.  It turns out that $J(Z)$ is
isomorphic to $P_L$ as a principally polarized abelian variety.

The Prym construction is already well-understood over 
algebraically closed fields of any characteristic (other than two)
\cite{beauville77,murre72a}.  The goal of the present section is to show
that this construction makes sense in families.

Thanks to Beauville, Mumford and Murre, one knows that Prym varieties
over algebraically closed fields have a certain universal property.
This property is briefly recalled here, following \cite[Sec.\ 3.2]{beauville77}.

Let $Z/k$ be a cubic threefold over an algebraically closed field in
which $2$ is invertible, and 
as above let $Z^*$ be the blowup of $Z$ along a good line $L$.  Let
$A^2(Z)$ be the group of rational
equivalence classes of cycles on $Z$ of codimension two which are
algebraically equivalent to zero, and define $A^2(Z^*)$ analogously.

There is an isomorphism of abstract groups $P_L(k) \sriso A^2(Z^*)$
\cite[Thm.\ 3.1]{beauville77}.
Inverting this map gives an isomorphism of abstract groups
$\alpha'_L(k): A^2(Z^*)\sriso P_L(k)$
which is regular, as follows.
For a smooth variety $T/k$, declare that a (set-theoretic) map
$a:T(k)\ra A^2(Z^*)$ is algebraic if there 
exists a cycle $\xi$ on $Z^*\cross T$ such that, for $t\in T(k)$,
$\xi_t = a(t)$. Then $\alpha'_L(k)$ is regular in the sense that for
any algebraic map $T(k) \ra A^2(Z^*)$, the composition $T(k) \ra
A^2(Z^*) \ra P_L(k)$ is induced by a morphism $T \ra P_L$ of
varieties.  Following Beauville, we will abuse notation somewhat and
write this morphism as $\alpha'_L: A^2(Z^*) \ra P_L$.

Beauville shows that $P_L$ is the universal algebraic representative of $A^2(Z)$: 

\begin{proposition}
\label{propuniversal}
\begin{alphabetize}
\item There is a regular map $\alpha_L: A^2(Z) \ra P_L$ which induces
  an isomorphism $A^2(Z) \iso P_L(k)$.
\item If $X/k$ is any abelian variety and
$A^2(Z) \ra X$ is regular, then there is a unique morphism $\beta_X:P_L \ra
X$ which makes the following diagram commute:
\begin{diagram}[LaTeXeqno]
\label{diaguniversalredux}
A^2(Z) & &\rto&& X \\
&\rdto^{\alpha_L} &&\ruto^{\beta_X} \\
&& P_L &&
\end{diagram}
\item There is a correspondence on $A^2(Z)$ which induces the
  principal polarization on $P_L$; in \eqref{diaguniversalredux}, if
  $A^2(Z) \ra X$ is compatible with this correspondence, then
  $\beta_X$ is a morphism of polarized abelian varieties.
\end{alphabetize}
\end{proposition}

\begin{proof}
There is a canonical isomorphism of abstract
groups $A^2(Z) \iso A^2(Z^*)$ induced by the blowup morphism $b_L:Z^*
\ra Z$ \cite[Lem.\ 2]{murre73}.  Suppose $T(k) \ra A^2(Z)$ is
algebraic, and represented by a cycle $\xi$ on $Z\cross T$.  Then the
composition $T(k) \ra A^2(Z) \sra{b_L^*} A^2(Z^*)$ is represented by
the cycle $(b_L^*\cross\id)(\xi)$, and thus algebraic.  There is thus a
regular map $\alpha_L: A^2(Z) \ra P_L$.
Part (b)  is precisely \cite[Prop.\ 3.3]{beauville77}; $P_L$ is
the unique algebraic representative of $A^2(Z)$.  The compatibility
with polarizations is \cite[Prop.\ 3.5]{beauville77}.  (We emphasize
that the cited work of Beauville is valid over any algebraically
closed field which is not of characteristic two.)
\end{proof}

In particular, the principally polarized abelian variety $P_L$ is, up
to canonical isomorphism, 
independent of the choice of (suitably generic) $L$, and 
we will denote this abelian variety by $P(Z)$.

\subsection{Arbitrary fields}
\label{subsecprymfield}

As a warmup we will show that, given a cubic threefold $Z$ over an
arbitrary field $K$, one can canonically associate to it a principally
polarized abelian fivefold over $K$.

\begin{lemma}
\label{lemarbfield}
Let $Z/K$ be a cubic threefold. 
\begin{alphabetize}
\item If $L$ is a sufficiently generic line on $Z_{\bar K}$, then
$P_L(Z_{\bar K})$ descends to $K$.
\item If $M$ is a second sufficiently generic line on $Z_{\bar K}$,
  then there is a canonical isomorphism $P_L(Z) \ra P_M(Z)$ of
  principally polarized abelian varieties over $K$.
\end{alphabetize}
\end{lemma}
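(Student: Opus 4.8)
The plan is to prove (a) by Galois descent, using the universal property recorded in \eqref{diaguniversal} to manufacture the descent datum, and then to deduce (b) by checking that the comparison isomorphism over $\bar K$ is itself equivariant. Throughout I would exploit the fact, noted at the end of Section \ref{subsecprymalgclosed}, that the principally polarized abelian variety $P_L(Z_{\bar K})$ is canonically independent of the choice of sufficiently generic line; write $P = P(Z_{\bar K})$ for this common algebraic representative of $A^2(Z_{\bar K})$.

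First I would arrange that $P$ is defined over $K^{\sep}$. The locus of sufficiently generic (``good'') lines is a nonempty Zariski-open subset $U$ of the Fano surface $F_Z$, and since the genericity conditions of \cite[Prop.\ 1.25]{murre72a} are geometric, $U$ descends to a $K$-subscheme of $F_Z$. As $U$ is smooth and nonempty, it acquires a point over $K^{\sep}$; fixing such a line $L \in U(K^{\sep})$ and carrying out the Prym construction over $K^{\sep}$ produces a principally polarized abelian variety $P_L$ over $K^{\sep}$ with $(P_L)_{\bar K} \iso P$.

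Next I would build the descent datum. For $\sigma \in \gal(K^{\sep}/K)$ the conjugate ${}^\sigma L$ is again a point of $U(K^{\sep})$, and, the whole construction being defined over $K$, the Galois conjugate ${}^\sigma P_L$ is identified with $P_{{}^\sigma L}$. The universal property over $\bar K$ supplies a canonical isomorphism $P_{{}^\sigma L} \sriso P_L$ of principally polarized abelian varieties, induced by the identity on $A^2(Z_{\bar K})$; because it is canonical it is $\gal(K^{\sep}/K)$-equivariant, and hence already defined over $K^{\sep}$. Composing yields $f_\sigma\colon {}^\sigma P_L \sriso P_L$. The crux of the argument---and the step I expect to be the main obstacle---is the cocycle identity $f_{\sigma\tau} = f_\sigma \circ {}^\sigma f_\tau$; this is exactly where the uniqueness clause in \eqref{diaguniversal} does the work, since both sides are the morphism induced by the identity on $A^2(Z_{\bar K})$ and are therefore forced to agree. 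Because $P_L$ is projective, this descent datum is effective, producing an abelian variety over $K$ whose base change to $K^{\sep}$ is $P_L$; the principal polarization, being itself induced by a canonical correspondence on $A^2(Z)$, is respected by each $f_\sigma$ and so descends as well. This proves (a), and I write $P_L(Z)$ for the resulting principally polarized abelian variety over $K$.

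Finally, for (b) I would observe that a second good line $M$ over $\bar K$ gives, by the same argument, a descended $P_M(Z)$ over $K$, and that the canonical isomorphism $P_L(Z_{\bar K}) \sriso P_M(Z_{\bar K})$ furnished by the universal property intertwines the two descent data $(f_\sigma)$ and $(f'_\sigma)$---again by the uniqueness in \eqref{diaguniversal}, as every isomorphism in sight is the one induced by the identity on $A^2(Z_{\bar K})$. An isomorphism compatible with the descent data descends, yielding the desired canonical isomorphism $P_L(Z) \ra P_M(Z)$ over $K$, compatible with the principal polarizations.
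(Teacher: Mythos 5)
Your overall strategy---Galois descent, with the descent datum manufactured from the universal property \eqref{diaguniversal}, the cocycle identity extracted from its uniqueness clause, and effectivity from projectivity---is exactly the paper's. The step that is not sound as written is the field of definition of the comparison isomorphisms. The universal property is only available over an algebraically closed field, so the canonical isomorphism $P_{{}^\sigma L}\sriso P_L$ is a priori only a morphism over $\bar K$. Your justification, that ``because it is canonical it is $\gal(K^{\sep}/K)$-equivariant, and hence already defined over $K^{\sep}$,'' conflates two different things: equivariance under $\gal(K^{\sep}/K)$ is what gives the cocycle identity, whereas what is needed here is descent of the morphism from $\bar K$ down to $K^{\sep}$ (a purely inseparable issue in positive characteristic, where $\bar K\neq K^{\sep}$, for which some actual input is required), and then further down to a finite Galois extension of $K$.

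The paper addresses this by working over a finite Galois extension $K'/K$ throughout: it enlarges $K'$ so as to contain the field of definition of the $N$-torsion of each conjugate $P_{\sigma L}$ for some $N\ge 3$ invertible in $K$, and then invokes Silverberg's theorem to conclude that the isomorphisms $\beta_{\sigma,\bar K}$, though constructed over $\bar K$, are in fact defined over $K'$. Working at finite level also sidesteps a second problem in your write-up: you set up a descent datum for the infinite extension $K^{\sep}/K$, and effectivity of such a datum, even for a projective variety, requires the datum to be continuous, i.e., induced from a finite subextension---which is precisely what you have not established. Both issues disappear once everything is pinned down over a single finite Galois extension as in the paper. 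Your treatment of part (b) is otherwise in line with the paper's, but it inherits the same gap.
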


\begin{proof}
  Let $F = F_Z/K$ be the variety of lines on $Z$.  Then $F$ is an
  irreducible smooth (Fano) surface (e.g., \cite[1.1]{murre72a}, which
  is valid both in characteristic zero and in positive, odd
  characteristic).  Let $F^\good\subset F$ be the dense open
  subvariety constructed in \cite[Prop.\ 1.25]{murre72a}; it is
  defined over $K$.  Choose a finite Galois extension $K'/K$ such that
  there is a line $L$ (with moduli point) in $F^\good(K')$.
  (Membership in $F^\good$ is the ``suitable genericity'' referred to
  in Section \ref{subsecprymalgclosed}.)  The projection
  $\varpi_L:Z^*_{K'} \ra \proj^2$ is defined over $K'$, and thus so is
  its discriminant locus $\Delta_L\subset \proj^2$.  In fact,
  $\Delta_L$ is a smooth, projective, absolutely irreducible curve of
  genus $6$.

Inside $F$, let $\til\Delta_L$ be the set of (moduli points of) lines
which meet $L$; then $\til\Delta_L$ is a smooth, projective absolutely
irreducible curve \cite[Prop.\ 1.25.(iv)]{murre72a}.  Since $L$ and
$F$ are defined over $K'$, so is $\til\Delta_L$.  Moreover, there is a
natural \'etale double cover $\til\Delta_L \ra \Delta_L$; the fiber
over a point of $\Delta_L$ consists of the two lines of the degenerate
conic fiber over it.  The Jacobians $\pic^0(\til\Delta_L)$ and
$\pic^0(\Delta_L)$ are also defined over $K'$, and the Prym variety $P_L$ is a
principally polarized abelian variety over $K'$ (Lemma
\ref{lemprym}). We will use Galois
  descent (e.g., \cite[Sec.\ 
  6.2.B]{blr} or \cite[Thm.\ 6.23]{milneag}) to show that these
  objects are actually defined over $K$.

  Suppose $\sigma \in \gal(K'/K)$.  Then $\sigma L := L
  \cross_{K',\sigma} K'$ is also in $F^\good(K')$, and $\sigma P_L$
  may be calculated as $\sigma P_L = P_{\sigma L}$.

  Let $N \ge 3$ be invertible in $K$; if necessary, enlarge $K'$ so
  that $K'$ is still Galois over $K$, but contains the field of
  definition of all $N$-torsion of each $P_{\sigma L}$.  (Note that
  this does not enlarge the set of conjugates of $P_L$ under
  $\gal(K^\sep/K)$.)

  The universal property of Prym varieties (Proposition \ref{propuniversal})
  implies that, for $\sigma \in \gal(K'/K)$, there is a canonical
  isomorphism $\beta_{\sigma,\bar K}: P_{\sigma L, \bar K} \sriso P_{L,
    \bar K}$ of principally polarized abelian varieties over $\bar K$
  compatible with the maps $\alpha_{\sigma L, \bar K}:A^2(Z_{\bar K})
  \ra P_{\sigma L, \bar K}$ and $\alpha_{L,\bar K}:A^2(Z_{\bar K}) \ra
  P_{L, \bar K}$.  Since $K'$ includes the field of definition of the
  $N$-torsion of each $P_{\sigma L}$, the isomorphism
  $\beta_{\sigma,\bar K}$ is actually defined over $K'$ \cite[Thm.\
  2.4]{silverberg92}.  Moreover, because $P_{\sigma L} = \sigma P_L$,
  we obtain canonical isomorphisms $\beta_\sigma: \sigma P_L \sriso P_L$.
  In particular, if $\sigma, \tau \in \gal(K'/K)$, then (again by
  Proposition \ref{propuniversal}) the following diagram commutes:
\begin{diagram}
  \sigma \tau P_L & & \rto^{\beta_{\sigma\tau}} && P_L \\
  &\rdto<{\sigma \beta_\tau} &&\ruto>{\beta_\sigma} \\
  && \sigma P_L
\end{diagram}
Therefore, the data $\st{ \beta_\sigma: \sigma \in \gal(K'/K)}$
defines a $K'/K$ descent datum on $P_L$. Because $P_L$ is projective,
this descent datum is effective, and $P_L$ is defined over $K$.  This
proves (a).

The second part follows again from Galois descent.  As above, let $K'$
be a finite, Galois extension of $K$ such that $L$, $M$ and the
$N$-torsion of the associated Pryms are all defined over $K'$.  Then
for each $\sigma \in \gal(K'/K)$ there is a canonical isomorphism
$P_{\sigma L}(Z_{\bar K}) \sriso P_{\sigma M}(Z_{\bar K})$; this system
descends to the desired isomorphism $P_L(Z) \sriso P_M(Z)$.
\end{proof}

Consequently, if $Z/K$ is a cubic threefold over a field, one may
canonically associate to it a principally polarized abelian fivefold,
which will be simply denoted $P(Z)$.

\begin{remark}
Murre has shown \cite[Thm.\ 7]{murre74} that $P(Z_{\bar K})$ is
isomorphic to the Albanese variety of $F_{Z_{\bar K}}$.  This, combined
with the general fact that an Albanese variety exists over an
arbitrary base field, gives an alternative proof of Lemma
\ref{lemarbfield}.  However, the techniques used here will be deployed
again over a more general base scheme in Theorem \ref{thbuildprym},
while relative Albanese schemes are not known to exist in full
generality.
\end{remark}

\subsection{Arbitrary base}
\label{subsecprymscheme}

In fact, the Prym construction makes sense for
a cubic threefold over an arbitrary normal Noetherian base.

\begin{theorem}
\label{thbuildprym}
Let $S$ be a normal Noetherian scheme over $\integ[1/2]$, and let $Z
\ra S$ be a relative smooth cubic threefold.  There is a principally
polarized abelian scheme $P(Z) \ra S$ such that, for each
point $s \in S$, $P(Z)_s \iso P(Z_s)$.
\end{theorem}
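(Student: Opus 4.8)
The plan is to globalize the field-level construction of Lemma \ref{lemarbfield} by spreading out and gluing, using the universal property \eqref{diaguniversal} as the essential rigidifying tool. The strategy mirrors the descent argument of the previous section, but now the role of $\gal(K'/K)$ is played by the overlaps of an \'etale cover of $S$ that trivializes the choice of good line. Throughout I would work with the relative Fano scheme $F = F_{Z/S}$ of lines on $Z \ra S$, which is smooth and projective over $S$ with geometrically irreducible fibers, and its dense open subscheme $F^\good \subset F$ of good lines; these exist relatively because the genericity conditions of \cite[Prop.\ 1.25]{murre72a} are open and stable under base change.

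First I would reduce to the case where $Z \ra S$ admits a good line, i.e.\ a section $L: S \ra F^\good$. Since $F^\good \ra S$ is smooth and surjective, after passing to an \'etale cover $S' \ra S$ such a section exists; moreover I may refine $S'$ so that it also trivializes the $N$-torsion ($N \ge 3$ invertible on $S$) of the relevant Prym schemes. Over such an $S'$, the relative Prym construction of Section \ref{subsecprym} (Lemma \ref{lemprym}) produces a principally polarized abelian scheme $P_L \ra S'$ whose fibers are the Pryms $P_{L_s}(Z_s)$ of the individual threefolds. The heart of the argument is then descent along $S' \ra S$: on the double overlap $S'' = S' \times_S S'$ I obtain two good lines $L_1, L_2$ (the two pullbacks of $L$) and two abelian schemes $P_{L_1}, P_{L_2}$, and I must produce a canonical isomorphism $\beta: P_{L_1} \sriso P_{L_2}$ satisfying the cocycle condition on triple overlaps.

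The isomorphism $\beta$ is supplied fiberwise by the universal property: over each geometric point of $S''$, diagram \eqref{diaguniversal} gives a canonical identification $P_{L_1,s} \sriso P_{L_2,s}$ compatible with the maps from $A^2(Z_s)$. The key rigidity step is to promote these fiberwise isomorphisms to a single isomorphism of abelian schemes over $S''$. Here I would invoke the relative version of the fact that an isomorphism of abelian varieties defined on fibers, compatible with a fixed level-$N$ structure, spreads out over the base --- the arithmetic analogue of \cite[Thm.\ 2.4]{silverberg92} --- using that $S''$ is normal and that the $N$-torsion has been trivialized so that the isomorphism is forced to be constant in families. The cocycle condition on $S' \times_S S' \times_S S'$ then follows from the uniqueness clause in the universal property exactly as in the commuting triangle of Lemma \ref{lemarbfield}. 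Since $P_L$ is projective over $S'$, the descent datum $\{\beta\}$ is effective (by, e.g., \cite[Sec.\ 6.2.B]{blr}), yielding a principally polarized abelian scheme $P(Z) \ra S$ with the asserted fibers.

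I expect the main obstacle to be the rigidity step --- showing that the fiberwise universal isomorphisms assemble into a genuine morphism of abelian schemes over the (merely normal, not regular) base $S''$. Over a field this was handled by Silverberg's theorem, but in the relative setting I must ensure that normality of $S$ and the level structure together force the fiberwise maps to glue; the subtlety is that $A^2(Z)$ is only a fiberwise object, so the compatibility of the $\alpha_L$ maps must be phrased purely in terms of the abelian schemes and their torsion. Once this spreading-out is established, the descent and the identification of fibers are formal consequences of the universal property already proved over algebraically closed fields.
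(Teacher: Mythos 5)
Your overall architecture matches the paper's: pass to an \'etale cover of $S$ on which $F^\good \ra S$ acquires a section, run the relative Prym construction of Lemma \ref{lemprym} over that cover, and then descend. The construction over the cover is handled exactly as you describe (relative discriminant curve $\Delta_L$ inside the Grassmann scheme of planes through $L$, the \'etale double cover $\til\Delta_L \subset F$, and the principally polarized complement of $\pic^0(\Delta_L/S)$ in $\pic^0(\til\Delta_L/S)$).

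However, the step you yourself flag as the main obstacle --- promoting the fiberwise universal isomorphisms $P_{L_1,s} \sriso P_{L_2,s}$ to an isomorphism of abelian schemes over the overlap --- is a genuine gap as written. There is no ``relative Silverberg theorem'' to invoke, and trying to glue isomorphisms constructed separately at every fiber is the wrong move: $A^2(Z_s)$ is only defined fiber by fiber, so there is no a priori family of maps to rigidify. The paper sidesteps this entirely by constructing the descent isomorphism only at the \emph{generic point} of the cover. Taking $T \ra S$ to be \'etale \emph{Galois} (after reducing to $S$ irreducible, which is legitimate since $S$ is normal), one applies Lemma \ref{lemarbfield}(b) over the residue field of the generic point $\eta$ of $T$ to get canonical isomorphisms $P_{\sigma L,\eta} \sriso P_{L,\eta}$ for $\sigma \in \aut(T/S)$; these then extend uniquely to isomorphisms $P_{\sigma L} \sriso P_L$ over all of $T$ by the extension theorem for homomorphisms of abelian schemes over a normal base \cite[Prop.\ I.2.7]{faltingschai}. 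That extension theorem is the missing ingredient in your argument: it replaces both the fiberwise gluing and the auxiliary $N$-torsion trivialization (the level structure is only needed inside the field-level Lemma \ref{lemarbfield}, where Silverberg's theorem is applied over a field). The cocycle condition and effectivity of the descent datum then go through as you indicate.
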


\begin{proof}
  By definition, $Z$ comes equipped with an embedding inside a
  $\proj^4$-bundle $\proj V$ over $S$.  Because $S$ is locally
  Noetherian we may consider $F \ra S$, the scheme of
  lines on $Z$ \cite[Thm.\ 3.3]{altmankleiman}.  It is a smooth 
  relative surface over $S$.

Let $F^\good \subset F$ be the subscheme such that, for $s\in S$,
$(F^\good)s \iso F^\good_{Z_s}$, where the latter is the nonempty,
open subvariety constructed in \cite[Prop.\ 1.25]{murre72a} and used
in Lemma \ref{lemarbfield}.

Not only is every fiber of $F^\good \ra S$ dense in the corresponding
fiber of $F \ra S$, but $F^\good$ is actually open in $F$.  We briefly
sketch why this is so.  It suffices to verify the claim in the case
where $S = \spec A$ and $\proj V \iso \proj^4_S$.
For a given point $s\in S$, Murre characterizes
$F^\good_{Z_s}$ by first constructing certain open subsets $U_{1,s}$
and $U_{2,s}$, given as the complement of the vanishing locus of
certain explicit equations \cite[eq.\ (11) and Lemma 1.11]{murre72a} involving a cubic form
defining $Z_s$;
 then imposing an open condition
$U_{3,s}$, that a smooth hyperplane section of $Z_s$ passes through
the line in question; and then realizing $F^\good_{Z_s}$ as the
intersection of $U_{1,s}$, $U_{2,s}$ and $U_{3,s}$.  In fact, for each
$i \in \st{1,2,3}$, there is an open $U_i \subset F^\good$ whose fiber
at $s$ is $U_{i,s}$.  For $i=3$, the Bertini-type condition is clearly
open, while for $i = 1,2$, one can simply use Murre's
equations, now with coefficients in $A$.

  Initially, suppose that there is a section of $F^\good\ra S$,
  corresponding to a relative line $L\subset Z \ra S$.  Let $G_L \ra
  S$ be the Grassmann scheme of relative two-planes in $\proj V$ which
  contain $L$; note that for $s \in S$, $(G_L)_s \iso \proj^2_s$.  Let
  $\Delta_L \subset G_L$ be the (discriminant) subscheme of planes
  which meet $Z$ in a union of three lines.  For $s \in S$,
  $(\Delta_L)_s = \Delta_{L_s}$, and thus $\Delta_L \ra S$ is a proper
  smooth irreducible relative curve of genus $6$.  Similarly, let
  $\til \Delta_L \subset F$ be the subscheme of lines which meet $L$.
  Then $\til \Delta_L \ra \Delta_L$ is an \'etale morphism of degree
  two, and $\til \Delta_L \ra S$ is a proper smooth irreducible
  relative curve of genus $11$.   By Lemma \ref{lemprym}, 
there is a principally polarized complement $P(Z)$ for
$\pic^0(\Delta_L/S)$ inside 
  $\pic^0(\til\Delta_L/S)$; $P(Z)$ is the
  sought-for abelian scheme over $S$.

  Now suppose that $F^\good \ra S$ does not admit a section.  Since
  $S$ is normal, every connected component of $S$ is irreducible, and
  we may assume that $S$ itself is irreducible.  Since $F^\good \ra S$
  is smooth, it admits a section after \'etale base change.  So, let
  $T \ra S$ be an \'etale Galois morphism for which there exists a
  section $L \in F^\good(T)$, corresponding to a line in $\proj V$
  contained in $Z_T$.  The principally polarized abelian scheme $P_L
  \ra T$ has already been constructed.

The relevant descent argument is now entirely similar to that used in
Lemma \ref{lemarbfield}. Let $\eta$ be the (disjoint) union of the generic points
of $T$; note that each of these points is isomorphic to every other.  By Lemma
\ref{lemarbfield}(b), for each $\sigma \in \aut(T/S)$ there is a
canonical isomorphism $P_{\sigma L,\eta} \sriso P_{L,\eta}$.  Since $T$
is normal, each of these extends to an isomorphism $P_{\sigma L} \sriso
P_L$ \cite[Prop.\ I.2.7]{faltingschai}.  By Galois descent \cite[Sec.\
6.2.B]{blr}, $P_L$ descends to $S$.
\end{proof}

\begin{corollary}
\label{corperiodthreefold}
There is a morphism $\til\varpi:\tilstk T \ra \stk A_5$ such that, for
$t\in \tilstk T$, $\stk X_{\til\varpi(t)}\iso P(\stk Z_s)$.
\end{corollary}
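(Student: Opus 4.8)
The plan is to build the morphism $\til\varpi$ by applying Theorem \ref{thbuildprym} to the universal family over $\tilstk T$. Recall that $\tilstk T \subset H^0(\proj^4, \calo_{\proj^4}(3))$ is the space of smooth cubic forms in five variables, so it is a smooth (in particular normal) Noetherian scheme over $\integ[1/2]$, and it carries the tautological cubic threefold $h:\calz \ra \tilstk T$, which is a relative smooth cubic threefold. These are precisely the hypotheses of Theorem \ref{thbuildprym}. Applying that theorem to $S = \tilstk T$ and $Z = \calz$ produces a principally polarized abelian scheme $P(\calz) \ra \tilstk T$ of relative dimension five whose fiber over each $t \in \tilstk T$ is $P(\calz_t) = P(Z_t)$, the (Prym = intermediate Jacobian) abelian fivefold attached to the cubic threefold $Z_t$.

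The second step is to interpret this family as a morphism to the moduli stack. Since $P(\calz) \ra \tilstk T$ is a principally polarized abelian scheme of relative dimension five, it defines, by the modular interpretation of $\stk A_5$, a morphism $\til\varpi:\tilstk T \ra \stk A_5$ characterized by the property that the pullback of the tautological abelian scheme $f:\calx \ra \stk A_5$ along $\til\varpi$ is isomorphic, as a principally polarized abelian scheme, to $P(\calz)$. Evaluating this isomorphism fiberwise at a point $t \in \tilstk T$ gives $\stk X_{\til\varpi(t)} \iso P(\calz_t) = P(Z_t)$, which is exactly the asserted compatibility (with $s = t$ in the statement). This is essentially a formal consequence of Theorem \ref{thbuildprym} together with the universal property defining $\stk A_5$.

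There is no serious obstacle here: the corollary is a clean specialization of Theorem \ref{thbuildprym} to the universal base, and the only thing to verify is that $\tilstk T$ meets the normality and Noetherian hypotheses of that theorem, which it does since it is an open subscheme of an affine space (the complement of the discriminant) and hence smooth over $\integ[1/2]$. The one point deserving a word of care is that the modular interpretation of $\stk A_5$ used to extract $\til\varpi$ from $P(\calz)$ requires working over a base on which $2$ is invertible, which is consistent with the standing convention of this section that all schemes live over $\integ[1/2]$; the relative dimension five and the principal polarization are both guaranteed directly by Theorem \ref{thbuildprym}, so no further compatibility needs to be checked by hand.
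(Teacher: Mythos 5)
Your proposal is correct and follows exactly the paper's own (one-line) argument: observe that $\tilstk T$ is a normal Noetherian scheme and apply Theorem \ref{thbuildprym} to the tautological family, then read off the classifying morphism to $\stk A_5$. The extra detail you supply about the modular interpretation of $\stk A_5$ is just an unwinding of what the paper leaves implicit.
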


\begin{proof}
The parameter space $\tilstk T$ is itself represented by a normal
Noetherian scheme; now use Theorem \ref{thbuildprym}.
\end{proof}

Deligne has conjectured \cite[3.3]{deligneniveau} the existence of
such an arithmetic Torelli map for any universal smooth complete
intersection of Hodge level one.  In fact, he
proves that the universal intermediate Jacobian attached to the
complex fiber descends to $\rat$, and conjectures that it spreads out to $\integ$.
Corollary \ref{corperiodthreefold} proves this conjecture for the
universal family of cubic threefolds, at least away from the prime
$2$.  With this fact supplied, work of Deligne now gives a way to
calculate the middle cohomology of the tautological cubic threefold:

\begin{proposition}
\label{propdeligne}
Let $\calp = \til\varpi^*\calx$ be the relative Prym variety over
$\tilstk T$, with structural map $j:\calp \ra \tilstk T$.  Recall that
$h:\calz \ra \tilstk T$ is the tautological smooth cubic threefold.  
\begin{alphabetize}
\item For each rational prime $\ell$, 
  there is an isomorphism of sheaves  $R^3h_*\integ_\ell(1) \sriso
  R^1j_*\integ_\ell$ on $\tilstk T_{\integ[1/2\ell]}$ compatible with the intersection forms;
\item the canonical isomorphism $H^3_\derham(\calz/\tilstk T_\cx)
  \sriso H^1_\derham(\calp/\tilstk T_\cx)$ of vector bundles with
  connection descends to $H^3_\derham(\calz/\tilstk T_\rat) \sriso
  H^1_\derham(\calp/\tilstk T_\rat)$; and
\item for each odd prime $p$, there is a canonical isomorphism
  $H^3_\cris(\calz /\tilstk T_{\ff_p}) \sriso H^1_\cris(\calp /\tilstk
  T_{\ff_p})$ of isocrystals on $\tilstk T_{\ff_p}$.
\end{alphabetize}
\end{proposition}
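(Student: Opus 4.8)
The plan is to deduce all three comparison isomorphisms from the single known fact that, over $\cx$, the intermediate Jacobian $J(\calz/\tilstk T_\cx)$ coincides with the relative Prym $\calp_\cx$ as a polarized abelian scheme, and to propagate this identification across the various cohomology theories using Deligne's descent machinery. The key observation is that we have already constructed $\calp$ over the arithmetic base $\tilstk T$ (Corollary \ref{corperiodthreefold}), so the problem is not to \emph{build} an abelian scheme but to \emph{identify} its first cohomology with $R^3h_*(1)$ in each realization. Since all three parts assert a comparison that is transcendentally true over $\cx$, the strategy is to exhibit each isomorphism first over the complex fiber and then argue that it descends or spreads out to the indicated base.

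For part (a), I would first establish the isomorphism $R^3h_*\integ_\ell(1) \sriso R^1j_*\integ_\ell$ over $\cx$ using the classical identification of the intermediate Jacobian's $H^1$ with $H^3$ of the threefold (the level-one Hodge structure of $\calz$ is exactly the $H^1$ of its intermediate Jacobian, and the polarizations match). Deligne's theorem on the descent of such $\ell$-adic comparison isomorphisms then carries this over to $\tilstk T_{\rat}$, and since both sides are smooth lisse sheaves on the smooth $\integ[1/2\ell]$-scheme $\tilstk T_{\integ[1/2\ell]}$, I would invoke the rigidity of morphisms between lisse $\integ_\ell$-sheaves: an isomorphism over the generic (characteristic-zero) fiber that is a map of lisse sheaves extends uniquely, because $\tilstk T_{\integ[1/2\ell]}$ is normal and connected and the two sheaves are unramified. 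The compatibility with intersection forms is inherited from the compatibility of the principal polarization on $\calp$ with the cup-product form on $H^3$, already recorded in the universal property diagram \eqref{diaguniversal}.

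For parts (b) and (c), the same pattern applies but in the de Rham and crystalline realizations. For (b) I would invoke the canonical de Rham comparison, which over $\cx$ gives $H^3_\derham(\calz/\tilstk T_\cx) \sriso H^1_\derham(\calp/\tilstk T_\cx)$ respecting the Gauss--Manin connection and the Hodge filtration; descent to $\tilstk T_\rat$ follows from Deligne's result that the period isomorphism is defined over $\rat$ together with the fact that both de Rham bundles with connection are already defined over $\rat$, so an isomorphism of such objects over $\cx$ that is horizontal descends by faithfully flat descent along $\rat \ra \cx$. For (c), since $\calp$ exists over $\tilstk T_{\ff_p}$ for each odd $p$ by Theorem \ref{thbuildprym}, I would use the crystalline realization of Deligne's comparison: the isocrystal $H^3_\cris(\calz/\tilstk T_{\ff_p})$ and $H^1_\cris(\calp/\tilstk T_{\ff_p})$ are both pulled back from the same $p$-adic period datum, and the isomorphism is canonical because crystalline cohomology of the smooth proper families $\calz$ and $\calp$ is determined functorially by the families themselves.

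The main obstacle I expect is not the complex statements, which are classical, but the precise invocation of Deligne's descent theorem and its compatibility across realizations: one must verify that the abstractly constructed $\calp$ of Corollary \ref{corperiodthreefold} is genuinely the object whose cohomology Deligne's conjecture predicts, i.e., that the arithmetic Prym we built agrees with the spread-out intermediate Jacobian in \emph{all} fibers and theories simultaneously, rather than merely fiber by fiber. Concretely, the delicate point is that Deligne works with the universal intermediate Jacobian descended to $\rat$, and I must match his descended object with our $\calp$; since both restrict to the same polarized abelian scheme over $\cx$ and over each point $s$ one has $P(\calz_s) \iso \calp_s$, the uniqueness of algebraic representatives (the universal property \eqref{diaguniversal}) forces the two to coincide over $\tilstk T_\rat$, and hence the cohomological comparisons Deligne produces transport verbatim to our $\calp$. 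Once this identification is secured, the three statements follow by specializing Deligne's single comparison to the $\ell$-adic, de Rham, and crystalline realizations respectively.
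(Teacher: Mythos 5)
There is a genuine gap in your treatment of part (b), which is in fact the crux of the whole proposition. You assert that the canonical horizontal isomorphism $\alpha_\cx\colon H^3_\derham(\calz/\tilstk T_\cx) \sriso H^1_\derham(\calp/\tilstk T_\cx)$ ``descends by faithfully flat descent along $\rat\ra\cx$'' because both bundles with connection are defined over $\rat$, and you attribute the descent of the period isomorphism to a ``result'' of Deligne. Neither claim holds as stated. What Deligne proves is that the abelian scheme (the universal intermediate Jacobian) descends to $\rat$; the descent of the \emph{specific} isomorphism $\alpha_\cx$ is precisely his Conjecture 2.10, not a theorem, and it is exactly what part (b) asserts. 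Faithfully flat descent does not apply: an isomorphism over $\cx$ between objects defined over $\rat$ descends only if it is compatible with the two pullbacks to $\cx\tensor_\rat\cx$, which is not automatic. Horizontality pins $\alpha_\cx$ down only up to a scalar, so one knows abstractly that some horizontal isomorphism $\beta$ exists over $\rat$ and that $\alpha_\cx = c\,\beta_\cx$ for some $c\in\cx^\times$; the obstruction to descending $\alpha_\cx$ itself is the class of the multiplier $\mu(\beta)\in\rat^\times/(\rat^\times)^2$ measuring the failure of $\beta$ to match the cup product with the polarization form on the nose. The paper's proof kills this obstruction by observing that $\mu(\beta)$ is constant over the irreducible base, so it suffices to verify the Hodge conjecture for a single member of the family, and this substantive input is supplied by Ran and Shioda for the Fermat cubic threefold. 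Your argument contains no analogue of this step and would prove Deligne's Conjecture 2.10 by pure formalism, which cannot be right.

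Your parts (a) and (c) are closer to the mark but also diverge from the paper. Part (a) is simply Deligne's Proposition 3.4 and needs no reconstruction (your rigidity argument for lisse sheaves over the normal base is a reasonable sketch of what goes into it). For (c), the paper does not build the crystalline comparison ``from the same $p$-adic period datum''; it \emph{deduces} (c) from (b) via Ogus's theorem that the crystalline cohomology of a smooth proper family over $\ff_p$, with $\rat_p$-coefficients and viewed as a module with connection, is computed by the de Rham cohomology of a smooth proper lift to characteristic zero. Since your (c) is made to rest on an independent construction rather than on (b), and your (b) is unproved, the gap propagates.
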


\begin{proof}
We specialize the arguments of Deligne, which
are valid for any universal smooth complete intersection of level one,
to the case of cubic threefolds.

Part (a) is \cite[Prop.\ 3.4]{deligneniveau}.  

For part (b), the
canonical isomorphism of vector bundles on $\tilstk T_\cx$ may be
described as follows.  The transcendental construction of the
intermediate Jacobian yields an isomorphism 
\begin{diagram}
R^3h_{*}\integ(1)&\rto^\twiddle 
&
R^1j_*\integ
\end{diagram}
of polarized variations of Hodge structure on $\tilstk
T_\cx$.  Tensoring with the structure sheaf gives an isomorphism
\begin{diagram}
H^3_\derham(\calz/\tilstk T_\cx) &\rto^{\alpha_\cx} &
H^1_\derham(\calp/\tilstk T_\cx).
\end{diagram}
This isomorphism is compatible with the Gauss-Manin connection, and
takes the cup product on 
$H^3_\derham(\calz/\tilstk T_\cx)$ to the polarization form on
$H^1_\derham(\calp/\tilstk T_\cx)$.  Abstractly, one knows that there is
{\em some} horizontal isomorphism $\beta:H^3_\derham(\calz/\tilstk T_\rat)
\sriso H^1_\derham( \calp/\tilstk T_\rat)$ of vector bundles on
$\tilstk T_\rat$; Deligne conjectures
\cite[Conj.\ 2.10]{deligneniveau} that $\alpha_\cx$ itself
descends.   This would certainly be true if $\alpha_\cx$ were induced
by an algebraic correspondence on $\calz_\rat \cross_{\tilstk T_\rat}
\calp_\rat$, as predicted by the Hodge conjecture.

Deligne further gives several equivalent formulations of this
conjecture \cite[p.\ 247]{deligneniveau}.  The isomorphism $\beta$
takes the cup product to some multiple
$\mu(\beta)\in\rat\units$ of the polarization form.  This multiple is
necessarily constant on all of $\tilstk T_\rat$, and the class of
$\mu(\beta)$ in $\rat\units/(\rat\units)^2$ is independent of the
actual choice of ($\rat$-multiple of) $\beta$.  The isomorphism
$\alpha_\cx$ descends to an isomorphism of vector bundles on $\tilstk
T_\rat$ if and only if $\mu(\beta)$ is a square.

Since $\mu(\beta)$ is constant on $\tilstk T_\rat$, to prove (b) it
now suffices to prove the Hodge conjecture for a single member of the
family $\calz_\rat\cross_{\tilstk T_\rat} \calp_\rat  \ra \tilstk
T_\rat$.  Let $s\in \tilstk T_\rat$ represent the Fermat cubic
threefold, and let $X_0$ be the elliptic curve with complex
multiplication by $\integ[\zeta_3]$.  The intermediate
Jacobian $\calp_s$ is isogenous to $X_0^{\oplus 5}$
(\cite[11.9]{act02}, \cite{carlsontoledospecial}). Consequently,
$\calz_s \cross \calp_s$ is dominated by a product of Fermat
hypersurfaces of (prime) degree three, and thus satisfies the Hodge
conjecture \cite[Thm.\ IV]{shiodafermathodge}. 

Part (c) follows immediately from (b); after tensoring coefficients
with $\rat_p$, the crystalline cohomology of a
smooth proper family $Y \ra S/\ff_p$, thought of as a module with
connection,  may be computed using the de Rham
cohomology of a smooth proper lift to characteristic zero \cite[Thm.\ 3.10]{ogusii}.
\end{proof}

\subsection{Prym schemes}
\label{subsecprym}

Theorem \ref{thbuildprym} proceeds through a Prym
construction.  It is presumably well-known that one can associate a
principally polarized abelian scheme to any \'etale double cover of
relative curves over a base on which $2$ is invertible.  For want of a
suitable reference, details are provided here.

Let $X \ra S$ be an abelian scheme over a connected base, with dual
abelian scheme $\hat X$.  A line
bundle $\call$ on $X$ defines a morphism $\phi_\call: X \ra \hat X$.
A polarization of $X$ is an isogeny $\lambda:X \ra \hat X$ which,
\'etale-locally on $S$, is of the form $\phi_\call$ for some ample
$\call$.

Now suppose that $Y$ is a sub-abelian scheme of $X$, with inclusion
map $\iota:Y \inject X$.  A polarization $\lambda$ on $X$ induces a
polarization $\lambda_Y$ of $Y$.  As a map of abelian schemes, it is
given by $Y \sra \iota X \sra \lambda \hat X \sra{\hat\iota}
\hat Y$.  If $\lambda = \phi_\call$ for some ample line bundle
$\call$, then $\iota^*\call$ is ample on $Y$ and $\lambda_Y =
\phi_{\iota^*\call}$. Since $\lambda_Y$ is a surjective map of smooth group
schemes, it is flat \cite[Lem.\ 6.12]{git}.  In particular,
$\ker\lambda_Y$ is a finite flat group scheme.  The exponent of $Y$
(as a sub-abelian scheme of the polarized abelian scheme $X$) is the
smallest natural number $e = e(Y\inject X, \lambda)$ which annihilates
$\ker\lambda_Y$. 

\begin{lemma}
\label{lemcomp}
Let $\iota:Y\inject X \ra S$ be an inclusion of abelian schemes over a
connected base.  Let $\lambda$ be a principal polarization,
and suppose that $e = e(Y\inject X,\lambda)$ is invertible on $S$.
Then there is a complement for $Y$ inside $X$.
\end{lemma}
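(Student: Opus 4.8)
The plan is to produce the complement as the $\lambda$-orthogonal abelian subscheme of $Y$, using the principal polarization to turn a kernel into an honest abelian subscheme. First I would form the quotient abelian scheme $W := X/Y$ over $S$, together with its faithfully flat quotient map $q$, fitting into a short exact sequence $0 \ra Y \sra{\iota} X \sra{q} W \ra 0$ of abelian schemes (existence and exactness are standard, e.g.\ \cite{faltingschai}). Set $\psi := \hat\iota\comp\lambda: X \ra \hat Y$, so that $\psi\comp\iota = \lambda_Y$ is the induced polarization of the preceding discussion, and take as candidate complement $Z := \ker\psi$. The step where a naive argument breaks down over a general base is exactly this one: kernels (or identity components of kernels) of homomorphisms of abelian schemes need be neither flat nor smooth over $S$, so it is not clear a priori that $Z$ is an abelian \emph{sub}scheme rather than merely a subgroup scheme. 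This is the main obstacle.

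It is dissolved by the principal polarization. Dualizing the short exact sequence above gives an exact sequence $0 \ra \hat W \sra{\hat q} \hat X \sra{\hat\iota} \hat Y \ra 0$, in which $\hat q$ is a closed immersion identifying $\hat W$ with the abelian subscheme $\ker\hat\iota \inject \hat X$. Because $\lambda$ is an isomorphism, $Z = \ker(\hat\iota\comp\lambda) = \lambda\inv(\ker\hat\iota) = \lambda\inv(\hat q(\hat W))$, and the composite $\lambda\inv\comp\hat q:\hat W \ra X$ is a closed immersion. Thus $Z$ is identified with $\widehat{X/Y}$ and is, with no flatness or smoothness to check, an abelian subscheme of $X$.

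It then remains to verify that $Z$ is genuinely a complement, i.e.\ that the addition map $a:Y\times_S Z \ra X$ is an isogeny. Since $(\hat\iota\comp\lambda)\comp\iota = \lambda_Y$, one computes $Y\cap Z = \ker\lambda_Y$, which is finite flat over $S$ (this uses only the flatness of $\lambda_Y$ established above via \cite[Lem.\ 6.12]{git}) of rank $\deg\lambda_Y$, constant because $S$ is connected. Restricting $q$ to $Z$ yields a homomorphism $q\rest{Z}:Z \ra W$ whose kernel is $Z\cap Y = \ker\lambda_Y$; as $\dim Z = \dim W = \dim X - \dim Y$, it is fiberwise an isogeny. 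Consequently $\ker a \iso \ker\lambda_Y$ is finite, and $a$ is fiberwise surjective with finite kernel, hence quasi-finite and proper—so finite—and fiberwise flat, so flat; that is, $a$ is an isogeny. Finally, the hypothesis that $e$ is invertible enters here: since $\ker\lambda_Y \inject Y[e]$ is finite flat and killed by the invertible integer $e$, it is finite \'etale over $S$, and therefore so is $\ker a$. I would record this \'etaleness, as it is precisely what is needed downstream to descend the restricted polarization to a principal polarization on $Z$ and feed the complement into the Prym formalism; for the bare existence of a complement, however, the essential point is the identification $Z \iso \widehat{X/Y}$ afforded by Cartier duality and the principal polarization.
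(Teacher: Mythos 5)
Your argument is correct, and it reaches the complement by a genuinely different route from the paper's. The paper never forms the quotient $X/Y$: it constructs a quasi-inverse $\mu_Y:\hat Y \ra Y$ with $\mu_Y\comp\lambda_Y=[e]_Y$, forms $M_Y=\mu_Y\comp\hat\iota\comp\lambda:X\ra Y$, and proves that $M_Y$ is \emph{smooth} by reducing to the fibers (this is where the reducedness of $S$ is used) and checking surjectivity of $[e]_*$ on the tangent space at the origin (this is where the invertibility of $e$ is used); the complement is then the identity component of the reduced kernel of $M_Y$. You instead dualize $0\ra Y\ra X\ra X/Y\ra 0$ and use the principal polarization to transport the abelian subscheme $\widehat{X/Y}=\ker\hat\iota\subset\hat X$ to $Z=\ker(\hat\iota\comp\lambda)\subset X$, so that flatness and smoothness of $Z$ come for free from duality rather than from a tangent-space computation. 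Since $\ker\mu_Y$ is finite, $(\ker M_Y)^{0}=\ker(\hat\iota\comp\lambda)$, so the two constructions yield the same subscheme. Your route buys generality: it shows that existence of the complement needs neither the reducedness of $S$ nor the invertibility of $e$; as you correctly isolate, those hypotheses only serve to make $Y\cap Z=\ker\lambda_Y$ finite \'etale, which is what feeds into Lemma \ref{lemprym}. The one step you should not pass over too quickly is the exactness of the dual sequence over a general base, specifically that $\ker\hat\iota$ is flat and coincides with $\widehat{X/Y}$; this does follow (as $\hat\iota$ is fiberwise surjective, hence flat, so its kernel is a flat closed subgroup scheme agreeing fiberwise with the abelian scheme $\widehat{X/Y}$), but it merits a sentence or a precise reference rather than an appeal to ``standard.''
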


\begin{proof}
Let $[e]_Y$ denote the multiplication-by-$e$ map on $Y$.  Since $\ker
\lambda_Y \subseteq \ker [e]_Y$, there exists a morphism $\mu_Y:\hat Y
\ra Y$ such that $\mu_Y \comp \lambda_Y = [e]_Y$.  
Consider
the norm endomorphism $N_Y \in \End(X)$ given by
\begin{diagram}
X & \rto^\lambda & \hat X & \rto^{\hat\iota} & \hat Y & \rto^{\mu_Y} &
Y & \rto^\iota & X.
\end{diagram}
The image $N_X(X)$ is $Y$.  Moreover, the argument of  \cite[p.\
125]{birkenhakelange} shows that $N_X\rest Y = [e]_Y$.     (While the
cited result is only claimed for complex abelian varieties, the key
calculation \cite[Lem.\ 5.3.1]{birkenhakelange} relies only on the
symmetry of the polarization morphism $\lambda$.)  

Consider the morphism $M_Y := \mu_Y\comp \hat\iota \comp \lambda:X \ra
Y$ of abelian schemes over $S$.  We will show that $M_Y$ is smooth.
Since $M_Y$ is a surjective morphism of smooth group schemes, it is
flat.  Consequently, in order to show $M_Y$ is smooth, it suffices
to show that for each point $y \in Y$, the morphism $X_y \ra \spec
\kappa(y)$ is smooth, where $\kappa(y)$ is the residue field at $y$
\cite[IV$_4$17.5.1]{ega4}.  Thus, it suffices to prove the claim in
the special case where $S = \spec K$ is the
spectrum of a field in which $e$ is invertible.   Because $M_Y$ is a
homomorphism of group schemes over a field, it suffices to verify its
smoothness at a single point \cite[VI.1.3]{sga3}.
The respective images $\zz_X$
and $\zz_Y$ of the identity sections of $X$ and $Y$ are each, as
$S$-schemes, isomorphic to $\spec K$ itself.  Therefore, in order to
show $M_Y$ is smooth, it suffices to show that the induced map on
tangent spaces $T_X(\zz_X) \ra T_Y(\zz_Y)$ is surjective \cite[IV$_4$17.11.1]{ega4}.  This last
claim is now obvious; because $e$ is invertible in $K$, the
differential $[e]_*:T_Y(\zz_Y) \ra T_Y(\zz_Y)$ is already surjective.

In particular, $M_Y:X \ra Y$ is separable, and thus its kernel $Z :=
X\cross_Y \zz_Y$ is a reduced group scheme over $S$.  Its connected
component of identity is the sought-for abelian scheme.
\end{proof} 

\begin{lemma}
\label{lemprym}
Let $S$ be a scheme on which $2$ is invertible, and let $\til
C \ra C/S$ be an \'etale double cover of smooth proper relative curves
over $S$.  Then there is a principally polarized complement for
$\pic^0(C)$ inside $\pic^0(\til C)$.
\end{lemma}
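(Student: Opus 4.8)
The plan is to exhibit the Prym as the complement furnished by Lemma~\ref{lemcomp}, so that the only real work is to verify the numerical hypothesis of that lemma and then to recognize the polarization that the ambient Jacobian induces on the complement as \emph{twice} a principal one.

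Since the formation of Jacobians, of the complement, and of its polarization all commute with passage to connected components, I would first assume $S$ connected (and reduced). Write $X = \pic^0(\til C/S)$ and $J = \pic^0(C/S)$ for the relative Jacobians; these are abelian schemes, and autoduality equips each with a canonical principal polarization, say $\lambda: X \sriso \hat X$ and $\lambda_C: J \sriso \hat J$. The double cover provides its deck involution $\sigma$ of $\til C$ over $C$, hence an involution $\sigma^*$ of $X$; the pullback map $\pi^*: J \ra X$; the norm map $\caln: X \ra J$; and the order-two class $\eta \in J[2](S)$ defining the cover. Because $2$ is invertible, $\langle\eta\rangle \iso \integ/2$ is finite \'etale and equals $\ker\pi^*$, so $\pi^*$ factors as $J \ra Y := J/\langle\eta\rangle \inject X$, realizing $Y$ as a sub-abelian scheme of $X$. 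This is the inclusion to which Lemma~\ref{lemcomp} will be applied.

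To check its hypothesis I would bound the exponent $e = e(Y \inject X, \lambda)$. Since pushforward is dual to pullback, under the autoduality identifications the dual of $\pi^*$ is $\caln$; together with $\caln \comp \pi^* = [2]_J$ this yields $\widehat{\pi^*} \comp \lambda \comp \pi^* = 2\lambda_C$, a polarization on $J$ with kernel exactly $J[2]$. Writing $\pi^* = \iota \comp q$ with $q: J \ra Y$ the degree-two quotient and $\iota: Y \inject X$, a short chase then shows that $\ker\lambda_Y$ is annihilated by $2$; hence $e \mid 2$ is invertible on $S$, and Lemma~\ref{lemcomp} produces a complement $P \subset X$ for $Y$. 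This $P$ is the relative Prym, and only its polarization remains to be understood.

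The heart of the matter is to show that $\lambda_P = \hat\iota_P \comp \lambda \comp \iota_P$, the polarization induced on $P$ by $\iota_P: P \inject X$, is of the form $2\,\Xi$ for a principal polarization $\Xi$. I would argue that $\ker\lambda_P = P \cap P^\perp$, where $P^\perp = \ker(\hat\iota_P \comp \lambda)$ is the orthogonal complement for the perfect pairing attached to $\lambda$; since $\sigma^*$ is an isometry and $P = \operatorname{im}(1 - \sigma^*)$, any point of $P \cap P^\perp$ is killed by $2$, so $\ker\lambda_P \inject P[2]$ as finite flat $S$-group schemes. On each geometric fibre the classical Prym theory of Mumford and Beauville \cite{beauville77} supplies the reverse inclusion $\ker\lambda_{P,s} = P_s[2]$. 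Hence this closed immersion is fibrewise an isomorphism; as it corresponds to a surjection of the locally free structure sheaves, which have equal rank by the fibrewise isomorphism, it is an isomorphism over $S$, and $\ker\lambda_P = \ker[2]_P$. It follows that $\lambda_P$ factors as $\lambda_P = \Xi \comp [2]_P$ for a unique $\Xi: P \ra \hat P$, that $\Xi$ is an isomorphism (since $\ker\lambda_P = \ker[2]_P$ forces $\Xi$ to have trivial kernel), and that $\Xi$ is symmetric and fibrewise ample, so a principal polarization. Gluing the canonical pairs $(P,\Xi)$ over the components of $S$ gives the assertion. I expect this last paragraph to be the genuine obstacle: Lemma~\ref{lemcomp} yields the complement and the exponent bound is formal, but pinning $\lambda_P$ down as exactly twice a principal polarization requires importing the fibrewise theorem and then using flatness over the reduced base to promote the fibrewise identity $\ker\lambda_{P,s} = P_s[2]$ to an equality of relative group schemes.
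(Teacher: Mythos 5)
Your proposal is correct and follows essentially the same route as the paper: check that the exponent of the sub-abelian scheme $\pic^0(C)/\langle\eta\rangle \inject \pic^0(\til C)$ is $2$, invoke Lemma~\ref{lemcomp} to produce the complement, and identify the induced polarization as twice a principal one. The paper simply delegates both the exponent computation and the final step to Corollaries 1 and 2 of \cite{mumfordprym}, checked at geometric points of each component, whereas you spell out explicitly the exponent bound via $\caln\comp\pi^*=[2]$, the fibrewise-to-relative promotion of $\ker\lambda_P=P[2]$, and (usefully) the fact that $\pi^*$ has kernel $\langle\eta\rangle$, so the ``canonical inclusion'' is really induced by the image of $\pi^*$.
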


\begin{proof}
The relative Picard scheme $\pic^0(\til C)$ is an abelian scheme over
$S$ with canonical principal polarization $\lambda$.  The cover of
curves yields a canonical 
inclusion $\pic^0(C) \inject \pic^0(\til C)$, and the exponent of
$\pic^0(C)$ is $2$.  (The exponent may be computed at any (geometric)
point of each component of $S$, in which case the result is classical
\cite[Cor.\ 1]{mumfordprym}.)   By Lemma \ref{lemcomp}, there exists a
complement $Z$ to $\pic^0(C)$ inside $\pic^0(\til C)$.   
Moreover, the polarization on
$Z$ induced by $\lambda$ is twice a principal polarization
\cite[Cor.\ 2]{mumfordprym}.
\end{proof}

\section{Cubic surfaces}
\label{secsurfaces}

We now resume working in the category of schemes over $\calo_E[1/6]$.
As noted in the introduction, to a cubic surface one can associate a
cubic threefold.  By composing this morphism with $\til\varpi:\tilstk T
\ra \stk A_5$ (Corollary \ref{corperiodthreefold}), one obtains a morphism $\tilstk S \ra \stk A_5$.  On
one hand, the abelian 
fivefolds thus obtained have an action by $\integ[\zeta_3]$, and the image of
this morphism is contained in $\stk M$.  On the other hand,
the main result of the present section is that this morphism actually
descends to a morphism of stacks $\stk S \ra \stk M$ (Proposition
\ref{propperiodsurface}) which is injective on points.  This is
accomplished by introducing rigidifying structures on both sides; a
choice of marking of the 27 lines on a cubic surface is tantamount to
a level $(1-\zeta_3)$-structure on the associated abelian variety.

\subsection{Action of $\integ[\zeta_3]$ on $P(Y)$}

Consider the morphism $\til\phi:\til S \ra \tilstk T$
which sends a cubic form in four variables $f(X_0,X_1,X_2,X_3)$ to the
cubic form $X_4^3 - f(X_0,X_1,X_2,X_3)$ in five variables.  (This
construction is introduced in \cite[Sec.\ 2.1]{act02}, albeit over $\cx$.)
Let
$j:\SL_4 \ra \SL_5$ be the inclusion $A \mapsto {\rm diag}(A,1)$.  Then
the diagram
\begin{diagram}
\SL_4 \cross \tilstk S & \rto &\tilstk S \\
\dto<{j\cross \til\phi}&&\dto>{\til\phi} \\
\SL_5 \cross \tilstk T & \rto & \tilstk T
\end{diagram}
commutes, and $\til\phi$ descends to a morphism of stacks
\begin{diagram}
\stk S & \rto^\phi & \stk T.
\end{diagram}
This morphism sends a cubic surface $Y$ over an affine scheme $\spec A$ to the
cyclic degree-three cover $Z = F(Y)$ of $\proj^3_A$ which is
ramified over $Y$. (In the general case of a cubic surface $Y$ inside a
projective $S$-bundle $\proj V$, $F(Y)$ is defined by glueing on
the base.)
We will typically denote
this cover by $\pi_Y: Z \ra \proj^3$. Note that, by construction,
$\aut(F(Y)/\proj^3)\iso \integ/3$, say with generator $\gamma$.

Moreover, suppose $f$ is a strictly stable form; it has at least one
singularity, and each such looks locally like $x_1^2+x_2^2+x_3^2=0$.
Then each singularity of $X_4^3 - f$ looks locally like
$x_4^3-(x_1^2+x_2^2+x_3^2)=0$, and in particular $X_4^3 - f$ is
strictly stable.  Consequently, $\til\phi$ and $\phi$ extend to
morphisms $\tilstk S_\stable \ra \tilstk T_\stable$ and $\stk
S_\stable \ra \stk T_\stable$, respectively.

\begin{lemma} 
\label{lemeisaction}
Suppose $S$ is a normal Noetherian scheme and $Y \in \stk S(S)$.  Then 
$P(F(Y))$ admits  
multiplication by $\integ[\zeta_3]$.
\end{lemma}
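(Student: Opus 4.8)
The plan is to exploit the functoriality of the Prym construction established in the previous section, together with the universal property of $P(Z)$ recorded in diagram \eqref{diaguniversal}. The key observation is that the cyclic cover $Z = F(Y) \to \proj^3$ carries, by construction, a canonical automorphism $\gamma$ of order three, generating $\aut(F(Y)/\proj^3) \iso \integ/3$. My strategy is to show that $\gamma$ induces an automorphism of the relative Prym scheme $P(Z) \to S$, and that this automorphism satisfies the cyclotomic relation $\gamma^2 + \gamma + 1 = 0$ in $\End_S(P(Z))$, thereby furnishing the desired $\integ[\zeta_3]$-action.

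First I would observe that since $\gamma$ is an automorphism of $Z$ over $S$ (indeed over $\proj^3$), it acts on the scheme of lines $F = F_Z$ and preserves the open subscheme $F^\good$. By Theorem \ref{thbuildprym}, the principally polarized abelian scheme $P(Z)$ is canonically associated to $Z/S$ and commutes with isomorphisms of cubic threefolds; hence $\gamma$ transports to a well-defined automorphism $\gamma_*$ of $P(Z)$ over $S$. Concretely, at the level of cycles, $\gamma$ acts on $A^2(Z_{\bar k})$ for each geometric point, and by the universal property \eqref{diaguniversal} this action descends through $\alpha_L$ to a morphism of principally polarized abelian varieties $P(Z_{\bar k}) \to P(Z_{\bar k})$; the uniqueness clause in the universal property guarantees these fiberwise automorphisms assemble into a single endomorphism of $P(Z)$ over $S$. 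Because $S$ is normal and Noetherian, it suffices to check the defining relation after restricting to generic points and then extending, exactly as in the proof of Theorem \ref{thbuildprym}.

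Next I would verify the relation $\gamma_*^2 + \gamma_* + \id = 0$. Since $S$ is reduced, it is enough to check this on each geometric fiber, and there it is enough to check it on a single $\ell$-adic Tate module for some $\ell$ invertible on $S$, or equivalently on $H^1(P(Z)_{\bar k})$. Via the isomorphism of Proposition \ref{propdeligne}(a), this translates into a statement about the action of $\gamma$ on $H^3(Z_{\bar k}, \integ_\ell)(1)$. The cyclic cover structure gives an eigenspace decomposition of this cohomology under $\langle\gamma\rangle$; the trivial-character part is pulled back from $\proj^3$ and hence vanishes in the primitive middle cohomology, so $\gamma$ acts through its nontrivial eigencharacters, which are precisely the primitive cube roots of unity. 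This forces $\gamma_*$ to satisfy its minimal polynomial $t^2 + t + 1$, yielding a ring homomorphism $\integ[\zeta_3] \to \End_S(P(Z))$ sending $\zeta_3$ to $\gamma_*$.

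The main obstacle I anticipate is the descent and integrality of the endomorphism: producing the $\integ[\zeta_3]$-action fiberwise is straightforward, but one must ensure $\gamma_*$ exists as an honest endomorphism over the (possibly mixed-characteristic) base $S$ rather than merely fiber by fiber. This is handled by the same mechanism as in Theorem \ref{thbuildprym}: over a normal Noetherian base, an endomorphism of an abelian scheme is determined by its restriction to the generic points, and such restrictions extend uniquely \cite[Prop.\ I.2.7]{faltingschai}. Thus the fiberwise relation, verified at generic points, propagates to all of $S$, and the action is automatically compatible with the principal polarization because $\gamma$ preserves the relevant correspondence inducing it.
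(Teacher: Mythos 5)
Your proposal is correct and follows essentially the same route as the paper: produce the order-three automorphism of $P(F(Y))$ from $\gamma$ via the universal property of the Prym on (geometric) fibers, spread it over the normal base, and then verify that the resulting action is genuinely by $\integ[\zeta_3]$ (i.e.\ the relation $\gamma_*^2+\gamma_*+1=0$) using the comparison of Proposition \ref{propdeligne}. The only, harmless, difference is in that last step: you check the cyclotomic relation directly on $\ell$-adic cohomology via the eigenspace decomposition of the cyclic triple cover (the invariant part of $H^3$ being pulled back from $\proj^3$ and hence zero), whereas the paper reduces to characteristic zero by constancy of characteristic polynomials in families and then cites the Hodge-theoretic computation of \cite[2.2]{act02}.
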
 

\begin{proof}
Let $X/S$ be any
abelian scheme, and let $U\subset S$ be a nonempty open subscheme.
Since endomorphisms of $X\rest U$ extend to $X$, it suffices to prove
the result when $S = \spec K$ is the spectrum of a field.  Let $Z =
F(Y)$, and let $\pi_Y:Z \ra \proj^3$ be the cyclic cubic cover of
$\proj^3$ with branch locus $Y$.

The automorphism $\gamma$ induces an automorphism $\gamma^*$ of
$A^2(Z_{\bar K})$.  Moreover, $\gamma^*$ acts nontrivially, for
otherwise $\pi_Y^*:A^2(\proj^3_{\bar K}) \ra A^2(Z_{\bar K})$ would be
an isomorphism.  The universal property of $P(Z_{\bar K})$, applied to
$\alpha \comp \gamma^*: A^2(Z) \ra P(Z_{\bar K})$, shows that $\gamma$
induces a nontrivial automorphism of $P(Z_{\bar K})$ of order three.
By descent, $\gamma$ induces an automorphism of $P(Z)$, and
$\integ[\zeta_3] \subseteq \End_K(Z)$.

It remains to check that $1 \in \integ[\zeta_3]$ acts as $[1]_{P(Z)}
\in \End(P(Z))$.  Any point in $\tilstk T$ of positive
characteristic is the specialization of a point in characteristic
zero.  Since the characteristic polynomial of an endomorphism of an
abelian variety is constant in families, it suffices to verify that
$1$ acts as $[1]_{P(Z)}$ when $K$ has characteristic zero.  This claim
follows from, e.g., Proposition \ref{propdeligne} and
\cite[2.2]{act02}.
\end{proof}

\begin{remark}
For a suitably generic cubic surface $Y$ over an algebraically closed 
field $k$, it is possible to visualize the action of $\gamma$, as
follows.  If $Y$ is generic, then if a plane in
$\proj^3$ intersects $Y$ in two lines, then it
does so in three distinct lines. In particular, no plane in $\proj^3$
tangent to $Y$ is tangent along an entire line of $Y$.  So, let $L_0
\subset Y$ be one of the 27 lines, and let $L = \pi_Y\inv(L_0) \subset
Z$.  Since $\pi_Y$ is ramified over $Y$, $L$ is a line inside $Z$, and
in fact $L \in F_Z^\good(k)$.  Moreover, $\gamma$ acts on $F$ (by
pullback) and fixes $L$ (since $L$ is supported in the ramification
locus of $\pi_Y$).  Then $\til \Delta_L$ and $\Delta_L$ are each stable
under the action of $\gamma$, and $\integ[\zeta_3]$ acts on each of
$\pic^0(\til \Delta_L)$ and $\pic^0(\Delta_L)$. The map $\til \Delta_L
\ra \Delta_L$ is $\ang \gamma$-equivariant, and $\integ[\zeta_3]$ acts
on the Prym variety $P(\til \Delta_L/\Delta_L)$.
\end{remark}

\begin{lemma}
\label{lemsig}
If $S$ is a normal Noetherian scheme and if $Y \in \stk S(S)$, then
the signature of $P(F(Y))$ is $(4,1)$. 
\end{lemma}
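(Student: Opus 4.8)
The plan is to reduce the assertion to a transcendental computation over $\cx$ and then read off the two eigenvalue multiplicities of the $\integ[\zeta_3]$-action from the Hodge decomposition. First I would note that the signature is a locally constant invariant of $S$. Indeed, since $2$ and $3$ are invertible on any $\calo_E[1/6]$-scheme, the element $\sqrt{-3}=\zeta_3-\bar\zeta_3$ is a unit, so $\calo_S\otimes\calo_E\iso\calo_S\oplus\calo_S$ and $\Lie(P(F(Y)))$ splits as a direct sum of two locally free $\calo_S$-modules of ranks $r$ and $s$; these ranks are locally constant functions on $S$. The pair $(r,s)$ depends only on the image of $S$ in the moduli stack $\stk S$, so it suffices to compute it for the universal family. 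Since the parameter space $\tilstk S$ is an open subscheme of an affine space over the irreducible base $\spec\calo_E[1/6]$, it is irreducible; arguing exactly as in the proof of Lemma \ref{lemeisaction} (every point of positive characteristic is a specialization of a characteristic-zero point, and the ranks $r,s$ are constant along specializations), I reduce to the case where the base field is $\cx$.

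Over $\cx$, the transcendental description of the intermediate Jacobian identifies $\Lie(P(F(Y)))\iso H^3(Z,\cx)/\fil^2\iso H^{1,2}(Z)$, where $Z=F(Y)$ is the cyclic cubic threefold and $\fil^2 H^3(Z)=H^{2,1}(Z)$ because $H^{3,0}(Z)=0$; compatibility with the $\integ[\zeta_3]$-structure is furnished by Proposition \ref{propdeligne}(b). Under this identification the generator $\zeta_3\in\integ[\zeta_3]\subset\End(P(F(Y)))$ acts through the covering automorphism $\gamma$ of Lemma \ref{lemeisaction}. Thus the signature is encoded by the $\gamma$-eigenspace decomposition of $H^{1,2}(Z)$, and everything comes down to computing the multiplicities of the two nontrivial eigenvalues of $\gamma^*$ on the middle cohomology of $Z$.

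For this last step I would use the Griffiths residue description of the Hodge pieces in terms of the Jacobian ring. Writing $Y=\{f=0\}\subset\proj^3$, the cover is $Z=\{G=0\}\subset\proj^4$ with $G=f(x_0,\dots,x_3)+x_4^3$ and $\gamma:x_4\mapsto\zeta_3 x_4$. Since $\partial G/\partial x_4=3x_4^2$, the Jacobian ideal of $G$ contains no linear forms, so $H^{2,1}(Z)\iso R(G)_1=\langle x_0,\dots,x_4\rangle$. Tracking how $\gamma$ scales the residue of $A\,\Omega/G^2$ — the canonical form $\Omega$ scales by $\zeta_3$, while the linear factor $A=x_i$ is fixed for $i<4$ and scales by $\zeta_3$ for $A=x_4$ — I find that $x_0,\dots,x_3$ span a four-dimensional eigenspace and $x_4$ a one-dimensional eigenspace; complex conjugation transports this to the desired splitting of $H^{1,2}(Z)$. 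Hence the two summands of $\Lie(P(F(Y)))$ have ranks $4$ and $1$. The only genuinely delicate point — and the step I expect to require the most care — is the bookkeeping identifying the eigenvalue $\iota(\zeta_3)$ with the rank-four (rather than rank-one) summand, so that the signature is $(4,1)$ and not $(1,4)$; this matching of conventions is exactly what is carried out in \cite[2.2]{act02}, to which I would appeal.
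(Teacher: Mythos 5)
Your proposal is correct and follows essentially the same route as the paper: reduce by constancy of the signature and irreducibility of the moduli space to a single complex point, then compute the signature Hodge-theoretically. The only difference is that you carry out the eigenvalue count explicitly via Griffiths residues (correctly, including the caveat about distinguishing $(4,1)$ from $(1,4)$), whereas the paper simply cites the direct Hodge-theoretic calculation in \cite[Lem.\ 2.6]{act02}.
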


\begin{proof}
Since the signature of an $\calo_E$ action is constant in families (on
which the discriminant of $\calo_E$ is invertible), and since $\stk S$
is irreducible, it suffices to verify the claim at a single point.
A direct Hodge-theoretic calculation \cite[Lem.\ 2.6]{act02} shows that the Prym variety
associated to any complex cubic surface has signature $(4,1)$.
\end{proof}

\subsection{Markings and level structure}

Suppose that $k$ is algebraically closed and $Y \in \stk S(k)$, and as
usual let $\pi_Y:Z= F(Y) \ra \proj^3$ be the cyclic cubic cover of $\proj^3$
ramified along $Y$.  If $L_1$ and $L_2$ are lines in $Y$, then the cycle
$[L_1]-[L_2]$ is algebraically trivial in $\proj^3$, and thus
$\pi_Y^*([L_1]-[L_2]) \in A^2(Z)$.  Let $\til L_i = \pi_Y\inv(L_i)$
for $i \in \st{1,2}$.  Then $\pi_Y^*[L_i] = 3[\til L_i]$.   Since
$A^2(Z)$ is a divisible group \cite[Lem.\ 0.1.1]{beauville77}, 
$[\til L_1] - [\til L_2] \in A^2(Z)$.  Moreover, since each $L_i$ is 
supported in the ramification locus of $\pi_Y$, $\til L_1$ and $\til L_2$
are fixed by $\gamma^*$.  By functoriality of the Prym construction, $\alpha([\til
L_1] - [\til L_2]) \in P(Z)[1-\zeta_3](k)$, the kernel on $P(Z)$ of
multiplication by $1-\zeta_3$.

More generally, suppose $Y/S$ is a smooth cubic surface over a scheme
$S$.  If $L_1$ and $L_2$ are relative lines in $Y$, then the difference
$[\til L_1] -
[\til L_2]$ of the classes of their inverse images under $\pi_Y$ corresponds to a section of $P(Z)[1-\zeta_3]$.

As in \cite[p.755]{matsumototerasoma}, fix isomorphisms $\weyl \iso
\aut(\Lambda_0)$ and $\weyl \iso \po(V_0,q_0)$.

\begin{lemma}
\label{lemmarkedmorphism}
Suppose $(Y,\Psi) \in
\stk S^\marked(S)$, and let $P = P(F(Y))$.  For $1 \le i \le 5$, let
$v_i = \alpha([\Psi\inv(e_i)] - [\Psi\inv(\ell_{i6})]) \in P(S)$.
Then $\st{v_1, \cdots, v_5}$ is an orthonormal basis for
$P[1-\zeta_3](S)$. 
\end{lemma}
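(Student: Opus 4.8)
The plan is to reduce the statement to a single intersection computation on a geometric fibre. Both claims --- that $\st{v_1,\dots,v_5}$ is a basis of the rank-five $\ff_3$-module $P[1-\zeta_3](S)$, and that it is orthonormal for $\bar h_\lambda$ --- commute with base change, since the cycle classes $[\til L]$, the map $\alpha$, and the pairing $\bar h_\lambda$ are all functorial. By the fibrewise criterion for a morphism of locally free sheaves to be an isomorphism, and because orthonormality is a condition on fibres, it suffices to treat $S=\spec k$ with $k$ algebraically closed. Moreover, five mutually orthogonal anisotropic vectors in a nondegenerate five-dimensional quadratic space are automatically linearly independent and hence a basis, so once orthonormality is known the basis claim is free; I will therefore concentrate on computing the Gram matrix $(\bar h_\lambda(v_i,v_j))$.

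If $k$ has positive (odd) characteristic, I would lift to characteristic zero. Because $\stk S^\marked$ is smooth over $\calo_E[1/6]$, the marked surface $(Y,\Psi)$ extends over a complete discrete valuation ring $R$ with residue field $k$ and generic point of characteristic zero; the twenty-seven lines, being split over $k$ by the marking, lift to $R$ by Hensel's lemma, and Theorem \ref{thbuildprym} extends $P$ over $\spec R$. The sections $v_i$ and the pairing $\bar h_\lambda$ then extend as well, and, being valued in the constant sheaf $\ff_3$, the Gram matrix on the special fibre agrees with that on the generic fibre. This reduces the whole lemma to the case $k=\cx$.

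Over $\cx$ the computation is lattice-theoretic. Realise $Y$ as the blowup of $\proj^2$ at six points, so that the Picard lattice is spanned by the hyperplane class $h$ and the exceptional classes $E_1,\dots,E_6$, with $h^2=1$, $E_i\cdot E_j=-\delta_{ij}$, $h\cdot E_i=0$, and with the class of $L_{ij}$ equal to $h-E_i-E_j$. The primitive lattice $K^\perp$, where $K=-3h+\sum_i E_i$, is a copy of the ${\rm E}_6$ root lattice, whose reduction modulo $3$ (its discriminant being $3$) has a one-dimensional radical; the quotient is a nondegenerate five-dimensional $\ff_3$-space. The input imported from \cite[Sec.\ 2]{act02} (see also \cite{matsumototerasoma}) is that, via $[\til L]=\tfrac13\pi_Y^*[L]$ and the map $\alpha$, this quotient is identified --- compatibly with $\bar h_\lambda$ up to the scalar relating the polarization form to the cup product --- with $(P[1-\zeta_3](\cx),\bar h_\lambda)$. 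A direct calculation, using that each $[E_i]-[L_{i6}]$ has class $2E_i+E_6-h$ and lies in $K^\perp$, then gives for $1\le i,j\le 5$
\[
([E_i]-[L_{i6}])\cdot([E_j]-[L_{j6}]) = (2E_i+E_6-h)\cdot(2E_j+E_6-h) = -4\,\delta_{ij},
\]
so the $v_i$ are mutually orthogonal with equal nonzero self-pairings; reducing $-4\equiv -1\pmod 3$ and applying the normalising scalar yields $\bar h_\lambda(v_i,v_j)=\delta_{ij}$, whence $\st{v_1,\dots,v_5}$ is an orthonormal basis.

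The main obstacle is the normalising scalar. The vanishing of the off-diagonal intersection numbers and the equality of the diagonal entries are robust and purely combinatorial, and already exhibit the $v_i$ as an orthogonal basis for \emph{some} rescaling of $\bar h_\lambda$; the delicate point is to check that the constant relating $\bar h_\lambda$ to the reduction of the cup product is exactly the one (namely $-1$) that turns the common diagonal value $-4\equiv -1$ into $+1$ rather than merely a nonzero constant --- note that getting this wrong changes the discriminant, since $-1\equiv 2$ is a nonsquare in $\ff_3$. Pinning this down requires tracking the factor $1/\sqrt{-3}$ in the definition of $h_\lambda$ together with the factor $\tfrac13$ coming from $\pi_Y^*[L]=3[\til L]$, and is where I would lean most heavily on the explicit identifications in \cite{act02} and \cite{matsumototerasoma}.
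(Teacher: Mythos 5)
Your proof is correct and follows essentially the same route as the paper: reduce, via the constancy of the \'etale group scheme $P[1-\zeta_3]$ in families and a lift to characteristic zero, to a single complex point, and there invoke the Matsumoto--Terasoma identification. The paper simply cites \cite[Prop.\ 5.5]{matsumototerasoma} for the entire complex statement; your explicit ${\rm E}_6$-lattice Gram-matrix computation is a correct partial unwinding of that citation, and the normalizing scalar you rightly flag as the delicate point (it changes the discriminant in $\ff_3\units/(\ff_3\units)^2$) is precisely what that cited proposition supplies.
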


\begin{proof}
Since $(Y,\Psi)$ is the pullback of the universal marked cubic surface
over $\stk S^\marked$, and since $\stk S^\marked$ is itself a normal Noetherian
quasiprojective scheme, we may assume that $S$ is normal Noetherian,
and even irreducible.
Since $3$ is invertible on $S$, the group scheme $P[1-\zeta_3]$ is
\'etale over $S$.  Therefore, it suffices to show there is some point
$s \in S$ such that $\st{v_{1,s}, \cdots, v_{5,s}}$ is an orthonormal
basis for $P[1-\zeta_3]_s(s)$.  Since $\stk S \ra \calo_E[1/6]$ is
flat, possibly after replacing $S$ with a lift to characteristic
zero, we may assume that $S$ has a point whose residue field has
characteristic zero.  The desired result is then \cite[Prop.\ 5.5]{matsumototerasoma}.
\end{proof}

\begin{proposition}
\label{propperiodsurface}
\begin{alphabetize}
\item 
There is a $\weyl$-equivariant radicial morphism
\begin{diagram}
\tau^\marked:\stk S^\marked & \rto & \stk M^{(1-\zeta_3)} \\ 
\end{diagram}
of schemes over $\calo_E[1/6]$.

\item
There is a morphism
\begin{diagram}
\tau: \stk S & \rto & \stk M
\end{diagram}
of stacks over $\calo_E[1/6]$ which is injective on points
and specializes to the complex period map of \cite{act02}.
\end{alphabetize}
\end{proposition}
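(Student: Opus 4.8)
The plan is to construct $\tau^\marked$ on the universal marked surface, read off $\weyl$-equivariance from Lemma~\ref{lemmarkedmorphism}, prove the radicial property by reducing it to injectivity on geometric points (where it becomes a Torelli statement for the associated cyclic cubic threefolds), and finally descend along the $\weyl$-action to obtain $\tau$. For part (a) I would begin with the universal marked cubic surface $(Y,\Psi)$ over $\stk S^\marked$. Since $\stk S^\marked$ is a smooth, hence normal Noetherian, quasiprojective scheme, I may form $Z = F(Y)$ via $\phi$ and feed it into Theorem~\ref{thbuildprym} to produce a principally polarized abelian fivefold $P = P(F(Y)) \ra \stk S^\marked$. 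Lemma~\ref{lemeisaction} equips $P$ with an $\calo_E$-action and Lemma~\ref{lemsig} shows its signature is $(4,1)$, so $(P,\iota,\lambda) \in \stk M(\stk S^\marked)$; Lemma~\ref{lemmarkedmorphism} then turns $\Psi$ into the orthonormal basis $\st{v_1,\dots,v_5}$ of $P[1-\zeta_3]$, i.e.\ into a $(1-\zeta_3)$-level structure $\Phi$. The quadruple $(P,\iota,\lambda,\Phi)$ is the sought object of $\stk M^{(1-\zeta_3)}(\stk S^\marked)$ and defines $\tau^\marked$.

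For $\weyl$-equivariance I would note that replacing $\Psi$ by $w\comp\Psi$ replaces each $v_i$ by its image under $w$, now viewed in $\po(V_0,q_0)$ through the fixed identifications $\weyl\iso\aut(\Lambda_0)$ and $\weyl\iso\po(V_0,q_0)$; this is exactly the compatibility underlying \cite[Prop.\ 5.5]{matsumototerasoma} and Lemma~\ref{lemmarkedmorphism}. For the radicial claim I would first reduce to geometric points: since $\spec\bar k\ra\spec k$ is faithfully flat, hence an epimorphism of schemes, $\stk S^\marked(k)\ra\stk S^\marked(\bar k)$ is injective for every field $k$, so universal injectivity of $\tau^\marked$ is equivalent to injectivity of $\stk S^\marked(\bar k)\ra\stk M^{(1-\zeta_3)}(\bar k)$ for every algebraically closed $\bar k$ (necessarily of characteristic $\ne 2,3$). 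So suppose $(Y_1,\Psi_1)$ and $(Y_2,\Psi_2)$ have isomorphic images. Forgetting level structures, the intermediate Jacobians $P(F(Y_i))$ are then isomorphic as principally polarized abelian varieties compatibly with their $\integ[\zeta_3]$-actions. Invoking the global Torelli theorem for cubic threefolds in characteristic $\ne 2$ \cite{clemensgriffiths,murre72a,beauville77}, together with the canonicity of the reconstruction (the threefold is the projectivized tangent cone at the unique singular point of the theta divisor, so an isomorphism of polarized Jacobians intertwining $\iota(\zeta_3)$ comes from an isomorphism $Z_1 \sriso Z_2$ intertwining the covering automorphisms $\gamma$), I recover an isomorphism of the cyclic covers over $\proj^3$ and hence of their branch loci $Y_1 \sriso Y_2$. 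Matching $\Phi_1=\Phi_2$ then forces the induced graph isomorphism of the $27$ lines to respect the markings, via Lemma~\ref{lemmarkedmorphism} and the equivariance just established (here the isomorphism $\weyl\inject\orth(V_0,q_0)\ra\po(V_0,q_0)$ is used), so $(Y_1,\Psi_1)\iso(Y_2,\Psi_2)$.

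For part (b) I would compose $\tau^\marked$ with the forgetful map $\stk M^{(1-\zeta_3)}\ra\stk M$. The resulting morphism $\stk S^\marked\ra\stk M$ is $\weyl$-invariant, since $\tau^\marked$ is $\weyl$-equivariant and $\stk M^{(1-\zeta_3)}\ra\stk M$ is invariant under its covering group $\po(V_0,q_0)\iso\weyl$; it therefore descends along the $\weyl$-cover $\stk S^\marked\ra\stk S=\stk S^\marked/\weyl$ to a morphism $\tau:\stk S\ra\stk M$. Injectivity on points follows from part (a): given $y_1,y_2\in\stk S(\bar k)$ with $\tau(y_1)\iso\tau(y_2)$, choose markings to lift to $(Y_i,\Psi_i)$; the underlying $(P_i,\iota_i,\lambda_i)$ are isomorphic, so the orthonormal bases $\Phi_i$ differ by some $g\in\po(V_0,q_0)\iso\weyl$, and after adjusting $\Psi_1$ by the corresponding element of $\weyl$ the two marked surfaces have the same image in $\stk M^{(1-\zeta_3)}(\bar k)$. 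Radicialness of $\tau^\marked$ then places the lifts in a single $\weyl$-orbit, so $y_1=y_2$. That $\tau$ specializes to the complex period map of \cite{act02} is the compatibility of the Prym construction with the transcendental intermediate Jacobian recorded in Proposition~\ref{propdeligne}(b).

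The step I expect to be the main obstacle is the equivariant global Torelli theorem for cubic threefolds in positive characteristic: one needs not only that the principally polarized intermediate Jacobian determines the threefold (more delicate in characteristic $p$ than the classical Clemens--Griffiths argument), but that the reconstruction is canonical enough to carry the $\integ[\zeta_3]$-action to the cyclic-cover automorphism $\gamma$, so that the branch surface is recovered. By contrast, upgrading from ``the surfaces agree'' to ``the markings agree'' reduces cleanly to Lemma~\ref{lemmarkedmorphism}, so the genuinely hard input is this equivariant Torelli statement.
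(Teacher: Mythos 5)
Your proposal is correct and follows essentially the same route as the paper: the object $(P(F(Y)),\iota,\lambda,\Phi)$ built from Theorem~\ref{thbuildprym}, Lemmas~\ref{lemeisaction}, \ref{lemsig}, \ref{lemmarkedmorphism}, with $\weyl$-equivariance quoted from Matsumoto--Terasoma, injectivity on geometric points from Beauville's Torelli theorem for cubic threefolds (the tangent cone at the triple point of $\Theta$), and descent along $\stk S = \stk S^\marked/\weyl$, $\stk M = \stk M^{(1-\zeta_3)}/\weyl$ for part (b). The ``equivariant Torelli'' step you flag as the main obstacle is exactly what the paper compresses into the citation of \cite[Cor.\ on p.205]{beauville81}, which is valid in characteristic $\neq 2$; your unpacking of it (functoriality of the tangent-cone reconstruction carrying $\iota(\zeta_3)$ to the covering automorphism $\gamma$, hence recovering the branch surface) is the intended argument.
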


\begin{proof}
The essential content is already present in Lemma \ref{lemmarkedmorphism}.
If $S$ is a normal
Noetherian scheme and $(Y,\Psi)$ is a marked cubic surface over $S$,
then the image of the moduli point of $(Y,\Psi)$ is that of the
abelian scheme $P(F(Y))$ with 
its canonical polarization and $\calo_E$-action, with level structure $\Phi: v_i
\mapsto \alpha([\Psi\inv(e_i)] - [\Psi\inv(\ell_{i6})])$. By applying
this construction to $\stk S^\marked$ itself we obtain a
$\stk S^\marked$-valued point of $\stk M^{(1-\zeta_3)}$, i.e., a morphism
$\stk S^\marked \ra \calm^{(1-\zeta_3)}$.   The $\weyl$-equivariance is
worked out in detail by Matsumoto and Terasoma; see \cite[Sec.\ 3.2,
Prop.\ 5.5]{matsumototerasoma}.  Beauville's Torelli-type
theorem \cite[Cor.\ on p.205]{beauville81} shows that this morphism is injective on geometric
points. Therefore, $\tau^\marked$ is radicial, i.e., injective on
$K$-points for any field $K$ admitting a map $\calo_E[1/6] \ra K$.  This proves (a). 

For (b), use part (a) and the identifications $\stk S =
\stk S^\marked/\weyl$ and $\stk M = \stk M^{(1-\zeta_3)}/\weyl$.  The
fact that $\tau_\cx$ specializes to the period map of \cite{act02}
follows from the known isomorphism
(Section \ref{subsecprymalgclosed}) between the Prym $P(Z)$ and the 
intermediate Jacobian $J(Z)$ of a smooth complex cubic threefold.
\end{proof}

Thus, we have constructed morphisms $\til\varpi$ and $\tau$ as follows:
\begin{diagram}[LaTeXeqno]
\label{diagbig}
\tilstk S & \rto^{\til \phi} & \tilstk T \\
\dto && \dto &\rdto>{\til\varpi}\\
\stk S & \rto^\phi & \stk T &&\stk A_5 \\
&\rdto>\tau&&\ruto^\jmath &\\
&&\stk M
\end{diagram}
Since $\stk S$ and $\stk M$ have irreducible four-dimensional fibers
over $\spec \calo_E[1/6]$, and since $\tau$ is radicial, it follows
that the image of $\tau$ is open in $\stk M$.  Describing this image
precisely requires a consideration of the compactification of $\tau$.

\section{Characterization of the image}
\label{secimage}

 The
goal of the present section is to extend $\tau$ to a homeomorphism
$\stk S_\stable \ra \stk M$ (Theorem \ref{thbigdiagram}), and thus
give an arithmetic version of many of the complex analytic facts
established in \cite{act02}.  In
particular, the complement of $\tau(\stk S)$ in $\stk M$ is an
irreducible horizontal divisor which corresponds to cubic surfaces
which are stable but not smooth.

\begin{lemma}
\label{lemcompdivisor}
The complement $\tilstk S_\stable \setcomp \tilstk S$ is an
irreducible horizontal 
divisor in $\tilstk S_\stable$.
\end{lemma}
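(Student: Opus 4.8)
The plan is to identify this complement with an open piece of the discriminant hypersurface, to prove that the latter is irreducible and horizontal by means of an incidence correspondence over the arithmetic base, and to deduce the stability of its generic member from the classical complex picture.

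Write $W = H^0(\proj^3,\calo_{\proj^3}(3))$ for the affine space of cubic forms in four variables, so that $\tilstk S \subset W$ is the complement of the discriminant locus $D \subset W$, the closed subscheme parametrizing singular cubics. Since smooth cubic surfaces exist in every residue characteristic of $\calo_E[1/6]$ (for instance the Fermat cubic, as $2,3$ are inverted) while singular ones certainly do as well, $D$ is a nonempty proper hypersurface, cut out by the discriminant $\Delta$, hence an effective Cartier divisor in $W$; and because $\tilstk S \subset \tilstk S_\stable \subset W$, we have $\tilstk S_\stable \setcomp \tilstk S = \tilstk S_\stable \cap D$. To control $D$ over the arithmetic base I would pass to the incidence scheme $I = \st{(f,p) : f \text{ is singular at } p} \subset W \cross \proj^3$. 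Projection to $\proj^3$ realizes $I$ as a linear subbundle of the trivial bundle $W \cross \proj^3$ over $\proj^3_{\calo_E[1/6]}$, of relative codimension four, cut out by the independent conditions $\del_i f(p) = 0$ (from which $f(p)=0$ follows by the Euler relation, these being independent since $6$ is inverted). As $\spec\calo_E[1/6]$ is integral, so is $\proj^3_{\calo_E[1/6]}$, and hence so is the total space $I$ of this subbundle. The first projection $I \ra W$ is proper with image exactly $D$, so $D$ is irreducible and dominates $\spec\calo_E[1/6]$; that is, $D$ is a horizontal rather than fiberwise divisor. (Over $\cx$ this recovers the classical irreducibility of the discriminant of cubic surfaces.)

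It then remains to check that the generic point $\eta_D$ of $D$ lies in the stable locus. Granting this, $D \cap \tilstk S_\stable$ is a nonempty open subscheme of the irreducible $D$ containing $\eta_D$, hence dense and irreducible; being the trace on the open subscheme $\tilstk S_\stable \subset W$ of the Cartier divisor $D$ it is itself a divisor in $\tilstk S_\stable$; and since it contains $\eta_D$ it is again horizontal. Because $D$ dominates the base, $\eta_D$ lies over the generic point $\spec\rat(\zeta_3)$, so the surface it parametrizes is defined over a field of characteristic zero. There the generic singular cubic surface carries a single ordinary node — the standard fact that a generic tangent hyperplane to the $3$-uple Veronese image of $\proj^3$ is tangent at one point, with nondegenerate contact — and a one-nodal cubic surface is GIT-stable. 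Thus $\eta_D \in \tilstk S_\stable$, and the identification $\tilstk S_\stable \setcomp \tilstk S = D \cap \tilstk S_\stable$ finishes the argument.

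The step demanding the most care is the passage from the complex to the arithmetic setting: irreducibility and horizontality of $D$ must be established over $\spec\calo_E[1/6]$ and not merely on each geometric fiber, which is precisely why I route the argument through the integral incidence bundle $I$ instead of invoking the classical complex discriminant directly. By contrast, the nondegeneracy of the generic node is required only at $\eta_D$, hence only in characteristic zero, where it is classical; this is what lets me avoid analyzing the nodal condition uniformly in positive characteristic.
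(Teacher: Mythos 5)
Your argument is correct and proves the lemma as literally stated, but it takes a genuinely different route from the paper's. The paper reduces immediately to the fibers: it asserts that for every $s \in \spec\calo_E[1/6]$ the complement $(\tilstk S_\stable\setcomp\tilstk S)_s$ is an irreducible divisor in $\tilstk S_{\stable,s}$, citing Beauville's Prop.\ 6.7 (stated over $\cx$) and observing that the argument there is valid over an arbitrary base field. You instead work with the total space over $\calo_E[1/6]$: the incidence scheme $I\subset W\cross\proj^3$ is integral because it is a sub-vector-bundle over the integral scheme $\proj^3_{\calo_E[1/6]}$, its proper image is the discriminant divisor $D$, hence $D$ is irreducible and horizontal, and you then need stability and one-nodality of the cubic only at the single generic point $\eta_D$, which lives in characteristic zero. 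This buys you a self-contained argument that never analyzes nodal degenerations in positive characteristic. What it costs you is the fiberwise statement: the paper's proof establishes that \emph{each geometric fiber} of $\tilstk S_\stable\setcomp\tilstk S$ over $\spec\calo_E[1/6]$ is irreducible, and that stronger fiberwise irreducibility is exactly what is invoked later (in the corollary following Theorem \ref{thbigdiagram}, where $\stk D$ is asserted to be fiberwise a closed irreducible substack of $\stk M$); irreducibility of the total space does not imply irreducibility of the closed fibers. The good news is that your incidence bundle delivers the fiberwise version almost for free --- $I_s\to D_s$ is surjective from an irreducible source for every $s$ --- but you would then have to check, in each residue characteristic $p>3$, that the generic singular cubic surface is stable (equivalently, that the one-nodal locus is dense in $D_s$), which is precisely the positive-characteristic analysis you set out to avoid. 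So: your proof is valid for the lemma as stated, but to support the paper's later use of it you should upgrade the final step to all residue characteristics.
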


\begin{proof}
  We need to show that if $s \in \spec \calo_E[1/6]$, then 
  $(\tilstk S_\stable \setcomp \tilstk S)_s$ is an irreducible divisor in
  $\tilstk S_{\stable,s}$.  This is \cite[Prop.\ 6.7]{beauville09}; while
  the result is claimed only for $(\tilstk S_\stable\setcomp\tilstk S)_\cx$, the
  argument given there is valid over an arbitrary base field.
\end{proof}

Let $\til\tau$ be the composition $\tilstk S \ra \stk S \sra \tau \stk M$.

\begin{lemma} 
\label{lemtiltauextends}
The Torelli map $\til\tau:\tilstk S \ra \stk M$ extends to $\til\tau:\tilstk S_\stable \ra
\stk M$. 
\end{lemma}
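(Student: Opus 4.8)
The plan is to extend $\til\tau$ across the irreducible boundary divisor $D := \tilstk S_\stable \setcomp \tilstk S$ (Lemma \ref{lemcompdivisor}) by extending the underlying principally polarized $\calo_E$-abelian scheme, rather than by a naive valuative argument into $\barstk M$ (which would only control codimension-one points and could not fill in an indeterminacy locus of codimension two). Write $\calp \ra \tilstk S$ for the pullback under $\til\tau$ of the universal object on $\stk M$, a principally polarized abelian fivefold with $\calo_E$-action of signature $(4,1)$. Since $\tilstk S_\stable$ is an open subscheme of the affine space $H^0(\proj^3,\calo_{\proj^3}(3))$ it is regular, and because $D$ is irreducible there is a single codimension-one point $\eta_D \in \tilstk S_\stable$ to treat, with local ring a discrete valuation ring $R := \calo_{\tilstk S_\stable, \eta_D}$.

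First I would reduce the entire problem to good reduction of $\calp$ at $\eta_D$. Suppose $\calp$ extends to an abelian scheme over $\spec R$; spreading out, it extends over some open $V \ni \eta_D$, and by full faithfulness of the restriction functor for abelian schemes over a normal base \cite[I.2.7]{faltingschai} this extension agrees with $\calp$ on $V \cap \tilstk S$ and inherits the polarization and the $\calo_E$-action. Gluing produces an abelian scheme over the open set $\tilstk S \cup V$, whose complement in $\tilstk S_\stable$ is contained in $D \setcomp \st{\eta_D}$, hence of codimension at least two. As $\tilstk S_\stable$ is regular, purity for abelian schemes (an abelian scheme over a regular base extends across any closed subset of codimension at least two) then yields an abelian scheme over all of $\tilstk S_\stable$ extending $\calp$. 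The principal polarization, the $\calo_E$-action and the signature $(4,1)$ all specialize, and $\tilstk S_\stable$ is irreducible, so the extended object defines an $\tilstk S_\stable$-point of $\stk M$; this is the sought extension $\til\tau:\tilstk S_\stable \ra \stk M$.

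The heart of the matter is therefore good reduction of $\calp$ at $\eta_D$. Because $D$ is \emph{horizontal}, $\eta_D$ lies over the generic point $\spec E$ of $\spec \calo_E[1/6]$, so $R$ is an equicharacteristic-zero discrete valuation ring, and the special fibre of the corresponding one-parameter degeneration is the generic stable, non-smooth cubic surface, which acquires a single node. By the N\'eron--Ogg--Shafarevich criterion, good reduction at $\eta_D$ is equivalent to triviality of the inertia action on $T_\ell$, equivalently to triviality of the topological local monodromy, around $D$, of the weight-one variation of Hodge structure underlying the family $\calp$. This monodromy may be computed on a complex-analytic disk transverse to $D$ after embedding a finitely generated subfield of $\kappa(\eta_D)$ into $\cx$.

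The main obstacle is exactly this monodromy computation, and here I would invoke the complex-analytic theory. The period map for cubic surfaces extends holomorphically across the discriminant of one-nodal surfaces into the interior of the ball quotient $\stk M_\cx$, rather than toward a cusp \cite{act02, matsumototerasoma}; it is the strictly semistable surfaces, excluded from $\tilstk S_\stable$, that map to the cusps. Extension of the period map to an interior point of the period domain means precisely that the limiting Hodge structure is pure of the same type and that the local system extends, i.e.\ the local monodromy around $D$ is trivial. This gives good reduction of $\calp$ at $\eta_D$, and with the reduction above the extension $\til\tau:\tilstk S_\stable \ra \stk M$ follows.
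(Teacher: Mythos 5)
Your overall architecture (treat the single codimension-one point $\eta_D$ of the horizontal boundary divisor, then invoke extension of abelian schemes over a regular base across codimension $\ge 2$) is sensible, and the codimension-$\ge 2$ step is exactly the tool the paper uses, namely Faltings' lemma \cite[Lemma 3.6]{moonen96}. But the heart of your argument --- good reduction at $\eta_D$ via N\'eron--Ogg--Shafarevich --- rests on the claim that the local monodromy of the Prym family around $D$ is trivial because the complex period map extends across the one-nodal discriminant to an \emph{interior} point of the ball quotient. That inference is not valid: extension of a period map into the interior of $\Gamma\backslash\ball^4$ only forces the local monodromy to lie in the (finite) stabilizer in $\Gamma$ of the limit point, i.e.\ to have finite order, not to be trivial. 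And in this situation it is genuinely nontrivial: over a node of $Y$ the triple cover $F(Y)$ acquires an $A_2$ singularity, whose Milnor monodromy in four variables has eigenvalues the primitive sixth roots of unity on the rank-two vanishing lattice; this is precisely the order-six complex reflection ("hexaflection") that Allcock--Carlson--Toledo identify as the meridian of the discriminant. So the inertia action on $T_\ell$ at $\eta_D$ is not shown to be trivial by your argument, and the proposed route to the codimension-one extension does not go through.

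The paper avoids having to prove anything at $\eta_D$: it imports the extension over the entire characteristic-zero fiber $\tilstk S_{\stable,\cx}\ra\stk M_\cx$ from \cite[Thm.\ 3.17]{act02} (this is where all the delicate complex-analytic content about the nodal degeneration lives), descends it to $E$ by \cite[15.8]{kudlarapoportii}, and then observes that for each finite prime $\idp$ the only locus where the object is not yet defined is $(\tilstk S_\stable\setcomp\tilstk S)_{[\idp]}$, which has codimension \emph{two} in $\tilstk S_\stable\cross\spec\calo_{E,(\idp)}$; Faltings' lemma then finishes. In other words, the paper arranges matters so that the codimension-one extension is a citation and only a codimension-two extension needs to be proved, whereas you attempt to re-derive the codimension-one extension from a monodromy computation, and that computation is where the proof breaks. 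To repair your approach you would need either to confront the nontrivial finite monodromy directly (which at best yields potentially good reduction, not a morphism to the moduli stack without further argument) or to fall back, as the paper does, on the already-established characteristic-zero statement.
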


\begin{proof}
The existence of $\tilstk S_{\stable,\cx} \ra \stk M_\cx$ is \cite[Thm.\
3.17]{act02}.  By descent (e.g., \cite[15.8]{kudlarapoportii}) one
obtains $\tilstk S_{\stable, E} \ra \stk M_E$.

Now let $\idp \subset \calo_E[1/6]$ be a nonzero prime ideal, and let
$\calo_{E,(\idp)}$ be the localization at $\idp$; it is an unramified
discrete valuation ring of mixed characteristic.  Let $[\idp]$ be the
closed point of $\spec \calo_{E,(\idp)}$.  Finally, let $S =
\tilstk S_\stable \cross \spec \calo_{E,(\idp)}$, and let $T =
(\tilstk S_\stable  \setcomp \tilstk S)_{[\idp]}$.  Then $S$ is
smooth, and $T$ is a 
closed subscheme of codimension two supported over the closed point of
$\spec \calo_{E,(\idp)}$.  A result of Faltings \cite[Lemma
3.6]{moonen96} shows that the abelian scheme over $S\setcomp T$
extends uniquely to one 
over all of $S$. Since the $\calo_E$-action also extends to this
abelian scheme, and the signature is constant, we obtain a morphism $S \ra \stk
M$.  Glueing yields the desired extension
$\tilstk S_\stable \ra \stk M$.
\end{proof}

\begin{lemma} 
The marked Torelli map $\tau^\marked: \stk S^\marked \ra \stk
M^{(1-\zeta_3)}$ extends to $\stk S_\stable^\marked \ra
\stk M^{(1-\zeta_3)}$ and to $\stk S_\semistable^\marked \ra \barstk
M^{(1-\zeta_3)}$.  The Torelli map $\tau:\stk S \ra \stk M$ extends to
$\stk S_\stable \ra \stk M$ and to $\stk S_\semistable \ra \barstk M$.
\end{lemma}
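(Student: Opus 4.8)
The plan is to deduce the four extensions from Lemma~\ref{lemtiltauextends}, Proposition~\ref{propperiodsurface}, and the complex period theory of \cite{act02,matsumototerasoma}, in the order: unmarked stable, marked stable, marked semistable, unmarked semistable.

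First I would extend $\tau$ over the stable locus. Lemma~\ref{lemtiltauextends} already produces $\til\tau\colon\tilstk S_\stable\ra\stk M$. On the dense open $\tilstk S$ this map is $\SL_4$-invariant, as there it factors through $\stk S=\tilstk S/\SL_4$; since $\stk M$ is separated, the locus in $\tilstk S_\stable\times\SL_4$ on which the action map and the projection agree after $\til\tau$ is closed and dense, hence all of it. Thus $\til\tau$ descends to $\stk S_\stable=\tilstk S_\stable/\SL_4\ra\stk M$. To obtain the marked stable map I would then lift along the finite morphism $\stk M^{(1-\zeta_3)}\ra\stk M$. The composite $\stk S_\stable^\marked\ra\stk S_\stable\ra\stk M$ together with $\tau^\marked$ exhibits $\stk S^\marked$ as a section, over the dense open $\stk S^\marked$, of the finite scheme $\stk S_\stable^\marked\times_{\stk M}\stk M^{(1-\zeta_3)}$; the closure of this section is finite and birational over the normal scheme $\stk S_\stable^\marked$, hence by Zariski's main theorem isomorphic to it, and composing its inverse with the second projection yields $\stk S_\stable^\marked\ra\stk M^{(1-\zeta_3)}$.

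The substantive case is the marked semistable extension, where the target is the proper scheme $\barstk M^{(1-\zeta_3)}$ and, at the single strictly semistable point, the associated abelian variety degenerates; the Faltings-type argument of Lemma~\ref{lemtiltauextends} therefore no longer applies and one must land in the boundary. Here I would first reduce to coarse spaces: since $\stk S_\semistable^\marked$ and $\barstk M^{(1-\zeta_3)}$ are the normalizations of the normal projective schemes $\coarse S_\semistable$ and $\coarse{\bar M}$ in the common function field (identified by $\tau^\marked$), the universal property of normalization shows it suffices to extend the coarse period map $\coarse S_\semistable\dashrightarrow\coarse{\bar M}$ across the single strictly semistable point. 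The complex extension $\stk S_{\semistable,\cx}^\marked\ra\barstk M_\cx^{(1-\zeta_3)}$ of \cite{act02,matsumototerasoma}, descended to $E$ as in Lemma~\ref{lemtiltauextends}, furnishes this map away from a closed set of codimension at least two (the boundary is finite over $\spec\calo_E[1/6]$). I would then take $\Gamma$ to be the closure of the graph: $\Gamma\ra\coarse S_\semistable$ is projective and an isomorphism over $\coarse S$, and it remains to verify that it is quasi-finite, whence an isomorphism by Zariski's main theorem and normality. Over the characteristic-zero fiber the complex extension is a morphism, so the fibers of $\Gamma$ there are points; for a boundary point in characteristic $p$, a trait through it whose generic point meets the characteristic-zero boundary---where the map is already defined---pins down the limit by the valuative criterion for the proper scheme $\coarse{\bar M}$, forcing the fiber to remain finite. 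Transporting the resulting map back up the normalizations produces $\stk S_\semistable^\marked\ra\barstk M^{(1-\zeta_3)}$.

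Finally, the unmarked semistable extension follows by descent along $\weyl$. Every marked map in sight is $\weyl$-equivariant: this holds for $\tau^\marked$ by Proposition~\ref{propperiodsurface}(a) and propagates to the extensions because equality of a map with its $\weyl$-translate is a closed condition already satisfied on the dense locus $\stk S^\marked$. Using $\stk S_\semistable=\stk S_\semistable^\marked/\weyl$ and $\barstk M=\barstk M^{(1-\zeta_3)}/\weyl$ (both arising from the $\weyl\iso\po(V_0,q_0)$-action on the minimal compactifications), the $\weyl$-quotient of the marked semistable map gives $\stk S_\semistable\ra\barstk M$, and the same quotient construction recovers $\stk S_\stable\ra\stk M$ compatibly with the first paragraph. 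The hard part is the quasi-finiteness of $\Gamma\ra\coarse S_\semistable$ over the boundary in mixed characteristic: the complex theory controls the characteristic-zero fibers, but excluding a positive-dimensional fiber over a strictly semistable point in characteristic $p$ is precisely where properness, normality, and the valuative criterion must replace the abelian-scheme extension theorem available in the stable case.
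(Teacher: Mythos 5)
Your architecture (unmarked stable, then marked stable via finiteness of $\stk M^{(1-\zeta_3)}\ra\stk M$ and normality, then marked semistable, then $\weyl$-quotients) matches the paper's, and the first two steps are sound: your graph-closure/Zariski's-main-theorem argument for the marked stable extension is just a reformulation of the paper's observation that the dominant map $\stk S^\marked_\stable\ra\coarse M$ must factor through the normalization of $\coarse M$ in $\stk M^{(1-\zeta_3)}$, which is $\stk M^{(1-\zeta_3)}$ itself since that scheme is normal. The reduction of the semistable case to coarse spaces via the universal property of normalization is also fine.

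The gap is exactly where you flag it: quasi-finiteness of $\Gamma\ra\coarse S_\semistable$ over a strictly semistable point in characteristic $p$, and the argument you offer does not close it. The fiber of the graph closure over such a point $x$ consists of \emph{all} $y$ with $(x,y)\in\Gamma$, i.e.\ all limits of the map along all possible approaches to $x$. The valuative criterion applied to a single trait through $x$ (the one whose generic point lies in the characteristic-zero boundary) produces the unique limit \emph{along that trait}; it says nothing about limits along other traits, e.g.\ ones lying entirely in the closed fiber or approaching $x$ through the interior. This is precisely the situation in which rational maps fail to extend: for $\proj^2\dashrightarrow\proj^1$, $[x:y:z]\mapsto[x:y]$, every trait through $[0:0:1]$ has a well-defined limit, yet the fiber of the graph closure there is all of $\proj^1$. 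So your argument proves nothing about the fiber dimension. To repair it you would need to show that the whole fiber of $\Gamma$ over $x$ is contained in the boundary $\coarse{\bar M}\setminus\coarse M$, which is finite over $\spec\calo_E[1/6]$ --- and that containment (degeneration of the abelian scheme along \emph{every} approach to $x$) is itself the substantive point requiring proof. The paper sidesteps all of this: since the indeterminacy locus is a single closed point in each closed fiber, hence of codimension $\ge 2$ in the normal scheme $\stk S^\marked_\semistable$, and since the cusp of $\barstk M^{(1-\zeta_3)}$ has an affine neighborhood into which the map lands near the boundary, the extension is immediate from the Hartogs-type statement \cite[IV$_4$.20.4.12]{ega4} for morphisms from normal schemes to affine schemes across codimension-two loci. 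No control of the graph closure is needed. I would also note, more mildly, that your first-paragraph descent of $\til\tau$ to the stack quotient $\stk S_\stable=\tilstk S_\stable/\SL_4$ by a ``closed and dense agreement locus'' argument is too naive for a map to a stack (one needs compatible $2$-isomorphisms, not mere pointwise agreement); the paper only descends to the coarse space at that stage and recovers the stack-level maps afterwards from the marked (scheme-level) ones by $\weyl$-equivariance, as you do in your final paragraph.
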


\begin{proof}
As in Section \ref{subsecmodcubics}, let $\coarse S_\stable$ be the
underlying coarse moduli scheme 
of $\stk S_\stable$, and similarly define $\coarse M$.  (Recall that
$\stk S^\marked_\stable$ is itself already a scheme.)
Since the morphism $\til
\tau: \tilstk S_\stable \ra \stk M$ of Lemma \ref{lemtiltauextends} is constant on the fibers of $\tilstk S_\stable \ra \stk
S_\stable$, we have a map of normal quasiprojective schemes $\coarse
\tau: \coarse S_\stable \ra \coarse M$.  Via the projection
$\stk S^\marked_\stable \ra \coarse S_\stable$, we obtain a dominant
morphism $\psi: 
\stk S^\marked_\stable \ra \coarse M$ which, as a rational map,
factors through $\stk M^{(1-\zeta_3)}$.  Therefore, as a morphism of schemes,
$\psi$ factors through the normalization of $\coarse M$ in $\stk
M^{(1-\zeta_3)}$; since $\stk M^{(1-\zeta_3)}$ is already a normal scheme,
$\psi$ factors through $\tau^\marked:\stk S_\stable^\marked \ra \stk
M^{(1-\zeta_3)}$.

This proves the first part of the claim.  The extension of
$\tau^\marked$ to $\stk S_\semistable$ follows from \cite[IV$_4$.20.4.12]{ega4}; by
choosing an affine neighborhood of each 
cusp, we have a rational map defined on an open subset of a normal
affine scheme whose complement has codimension greater than one.

Finally, the extension of $\tau$ follows from the $\weyl$-equivariance
of $\tau^\marked$.
\end{proof}

Ultimately, it will turn out that $\tau$ is injective on points.  As a
preliminary step, we will show that $\tau$ preserves the
stratification $\stk S \subsetneq \stk S_\stable \subsetneq S_\semistable$.

\begin{lemma}
\label{lemthetadivisor}
We have $\tau(\stk S) \cap \tau(\stk S_\stable \setcomp \stk S) = \emptyset$
and $\tau(\stk S_\stable) \cap \tau(\stk S_\semistable \setcomp \stk
S_\stable) = \emptyset$.
\end{lemma}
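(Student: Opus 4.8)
The plan is to verify each disjointness on geometric points, fiber by fiber over $\spec\calo_E[1/6]$, and in each fiber to reduce to the complex ball-quotient description of \cite{act02,matsumototerasoma} before propagating the conclusion across the arithmetic base. I would use three facts already in hand: by the discussion following Proposition \ref{propperiodsurface}, the restriction $\tau\rest{\stk S}$ is an open immersion onto an open substack $\stk U:=\tau(\stk S)\subseteq\stk M$; by Lemma \ref{lemtiltauextends} the extension of $\tau$ carries $\stk S_\stable$ into $\stk M$ itself; and the further extension carries $\stk S_\semistable$ into $\barstk M$. The two assertions are then of different natures: the first says the stable-but-not-smooth (nodal) locus lands in the closed complement $\stk M\setcomp\stk U$, while the second says the strictly semistable locus lands in the boundary $\barstk M\setcomp\stk M$.

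I would treat the second assertion first, as it is cleaner. Since $\tau(\stk S_\stable)\subseteq\stk M$, it suffices to show $\tau(\stk S_\semistable\setcomp\stk S_\stable)\subseteq\barstk M\setcomp\stk M$; the two images then lie on opposite sides of the boundary and are disjoint. The stratum $\tilstk S_\semistable\setcomp\tilstk S_\stable$ is a single $\SL_4$-orbit, so in the coarse space it is one point, flat over $\calo_E[1/6]$. Over $\cx$ this point is the cusp of the ball quotient $\stk S_{\semistable,\cx}$, and the minimal compactification sends it to a boundary point of $\barstk M_\cx$; concretely, the cubic threefold attached to the surface with three $A_2$ singularities degenerates so that the limiting semiabelian scheme has nontrivial toric part, forcing its moduli point into $\barstk M\setcomp\stk M$. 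To reach the other fibers I would use that $\barstk M\setcomp\stk M$ is closed and horizontal: the image of the single irreducible, flat semistable stratum meets this closed boundary over the characteristic-zero fibers, which are dense, so by irreducibility the entire image lies in $\barstk M\setcomp\stk M$.

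For the first assertion I would argue by contradiction. If some geometric point $b$ of $\stk S_\stable\setcomp\stk S$ satisfied $\tau(b)\in\stk U$, then, because $\tau\rest{\stk S}$ is an open immersion onto $\stk U$, there would be a unique smooth cubic surface $Y_s$ with $\tau(b)=\tau(s)$, hence an isomorphism $P(F(Y_b))\iso P(F(Y_s))$ of principally polarized $\calo_E$-abelian varieties with level structure. Over $\cx$ this cannot happen: \cite{act02,matsumototerasoma} identify $\stk U_\cx$ with the complement $\stk M_\cx\setcomp\stk H$ of the discriminant (Heegner) divisor $\stk H$ and carry $(\stk S_\stable\setcomp\stk S)_\cx$ onto $\stk H$, so the two images are literally complementary. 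The distinction is recorded by a class constant in families: the Prym $P(F(Y_b))$ of a nodal surface acquires the splitting, or extra algebraic class, cut out by $\stk H$, which $P(F(Y_s))$ does not. Letting $\stk D$ be the closure of $\tau((\stk S_\stable\setcomp\stk S)_\cx)$, a horizontal divisor in $\stk M$ by flatness of the nodal stratum (Lemma \ref{lemcompdivisor}), the assertion becomes the equality $\stk U=\stk M\setcomp\stk D$, which holds fiberwise in characteristic zero.

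The main obstacle is propagating the first assertion into the fibers of small characteristic, since disjointness of images is not preserved under specialization for free; the danger is an accidental coincidence of the two strata in bad reduction. Overcoming it is essentially a cross-strata Torelli statement, and I expect the crux to be producing the distinguishing invariant over all of $\calo_E[1/6]$ — for instance the Heegner/discriminant class above, or the degeneration type of the theta divisor of $P(F(Y))$ (singular along a positive-dimensional locus precisely when $Y$ is nodal) — rather than relying on the complex Hodge theory available only in characteristic zero. The flatness and irreducibility of the nodal divisor, the openness of $\stk U$, and the open-immersion property of $\tau\rest{\stk S}$ are the structural inputs I would combine with that invariant to conclude in every characteristic.
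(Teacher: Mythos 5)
Your handling of the second assertion is essentially the paper's: the strictly semistable stratum is a single point in each geometric fiber and is sent to the cusp of $\barstk M$, while $\tau(\stk S_\stable)$ lands inside $\stk M$, so the boundary separates the two images. One caution there: you invoke the claim that $\tau\rest{\stk S}$ is an open immersion onto its image, but that is only established in Lemma \ref{lemsmoothmarkedisom}, whose proof \emph{uses} the present lemma; fortunately you only need radiciality and openness of the image, which are already available from Proposition \ref{propperiodsurface} and the discussion following diagram \eqref{diagbig}.

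For the first assertion there is a genuine gap. You correctly diagnose that the entire difficulty is in positive characteristic and that a distinguishing invariant valid over all of $\calo_E[1/6]$ is needed, but you do not produce one: the ``Heegner/discriminant class'' route would be circular here, since the identification of $\stk D$ with a locus of reducible abelian varieties is deduced only later from Theorem \ref{thbigdiagram}, which rests on this lemma; and the theta-divisor invariant you describe (singular along a positive-dimensional locus precisely when $Y$ is nodal) is not the right statement. The paper's argument is concrete: over $\cx$, Casalaina--Martin and Friedman characterize intermediate Jacobians of smooth cubic threefolds as exactly the principally polarized fivefolds whose theta divisor has a \emph{unique} singularity of order \emph{exactly} three; combined with the known injectivity of $\tau_\cx$ on $\stk S_{\stable,\cx}$, this forces the theta divisor of the Prym of a nodal stable surface to be strictly worse (a second singular point, or a single singularity of order at least four). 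In characteristic $p\geq 5$ one lifts a nodal stable surface to a nodal stable surface over a mixed-characteristic discrete valuation ring, applies the complex statement to the generic fiber, and uses that singularities of the theta divisor can only worsen under specialization. That specialization step, together with the precise complex characterization, is exactly the content your proposal leaves unresolved.
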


\begin{proof}
The unique point of $(\stk S_\semistable \setcomp \stk S_\stable)(\cx)$
is sent to the unique cusp of $\barstk M_\cx$, and thus the same is true for
every geometric fiber of $\stk S_\semistable \ra \spec \calo_E[1/6]$.
Since $\tau(\stk S_\stable) \subset \stk M$, the second claim follows
immediately.

The first claim will be achieved using the explicit nature of the Torelli
theorem for cubic threefolds \cite{beauville81}.  Suppose $Z/k$ is a
smooth cubic threefold over 
an algebraically closed field, with principally polarized Prym variety
$(X,\lambda)$.  Consider the theta divisor $\Theta\subset X$
associated to $\lambda$.  Then $\Theta$ has a unique singularity,
which is a triple point; and $Z$ (actually, its affine cone) may be
reconstructed as the tangent cone at that singularity.  

Moreover, if $k=\cx$, a converse is available \cite{cmf05}; a
principally polarized abelian fivefold $(X,\lambda)$ is the Prym
variety of a smooth cubic threefold if and only if
its theta divisor has a unique singularity, which is a singularity of
order three.

In fact, the theta divisor of a stable, but not smooth, cubic
threefold must be more singular than that of a smooth cubic
threefold, as follows.

Let $Y/\cx$ be a nodal stable cubic surface.  Then $Z=F(Y)$ is a nodal
stable cubic 
threefold.  Because $Z$ can be realized as a degeneration of smooth cubic
threefolds, the theta divisor $\Theta$ of $(P(Z),\lambda)$ has at least one
singular point of order at least three.  In fact, either $\Theta$ has
a second singular point, or its unique singularity has order greater
than three; for if not, then by \cite{cmf05}, $(P(X),\lambda)$ would
also be the Prym of a smooth cubic threefold (still with suitable
$\integ[\zeta_3]$-action), which would contradict the known
injectivity of $\tau_\cx$ on $\stk S_{\stable,\cx}$ \cite[Thm.\ 3.17]{act02}.

Similarly, let $Y/k$ be a nodal stable cubic surface over a field
of characteristic at least $5$.  Let $R$ be a mixed characteristic
discrete valuation ring with residue field $k$; let $K = \Frac R$ be its
fraction field.  Lift $Y$ to a cubic surface $\til Y/R$ such that
$\til Y_K$ is also nodal stable.  We have seen that the theta divisor of the
principally polarized abelian variety $P(F(\til Y_K))$ either has at least
two singular points, or has one singularity of order greater than
three.  The same is necessarily true of $P(F(\til Y_k)) = P(F(Y))$, its
specialization.

In summary, the geometry of the theta divisor means that the Prym
variety of a nodal stable cubic surface cannot also be the Prym
variety of a smooth cubic surface.
\end{proof}

Let $\stk N$ be the image of $\stk S$ under $\tau$, and let $\stk D$
be its complement.  Then $\stk N^{(1-\zeta)}:= \stk N \cross_{\stk M}
\stk M^{(1-\zeta)}$ is the image of $\stk S^\marked$ under
$\tau^\marked$, with complement $\stk D^{(1-\zeta)}$.

\begin{lemma}
\label{lemsmoothmarkedisom}
The marked Torelli morphism maps $\stk S^\marked$ isomorphically onto
its image $\stk N^{(1-\zeta)}$.
\end{lemma}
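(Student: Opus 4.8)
The plan is to deduce the isomorphism from the fact that $\tau^\marked$ is an \emph{open immersion}: by construction $\stk N^{(1-\zeta)}$ is the image of $\tau^\marked$, and it is open in $\stk M^{(1-\zeta_3)}$ (it is the preimage under the finite \'etale cover $\stk M^{(1-\zeta_3)}\ra\stk M$ of the open set $\stk N = \tau(\stk S)$, which is open by the remark following diagram \eqref{diagbig}). So once $\tau^\marked$ is shown to be an open immersion it is automatically an isomorphism onto $\stk N^{(1-\zeta)}$. By the standard criterion \cite[IV$_4$17.9.1]{ega4}, a morphism is an open immersion if and only if it is \'etale and radicial. Radiciality is exactly Proposition \ref{propperiodsurface}(a), so the entire lemma reduces to proving that $\tau^\marked$ is \'etale.

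To prove \'etaleness I would argue fiber by fiber over the arithmetic base. Both $\stk S^\marked$ and $\stk M^{(1-\zeta_3)}$ are smooth over $\calo_E[1/6]$ of relative dimension four, and in particular $\stk S^\marked$ is flat over $\calo_E[1/6]$. It therefore suffices to check that for each $s\in\spec\calo_E[1/6]$ the morphism on geometric fibers $\tau^\marked_{\bar s}:\stk S^\marked_{\bar s}\ra\stk M^{(1-\zeta_3)}_{\bar s}$ is \'etale: flatness of $\tau^\marked$ then follows from the fiberwise criterion \cite[IV$_3$11.3.10]{ega4} (using that $\stk M^{(1-\zeta_3)}$ is smooth, hence flat, over $\calo_E[1/6]$), while unramifiedness follows because $\Omega_{\stk S^\marked/\stk M^{(1-\zeta_3)}}$ base-changes on each fiber to $\Omega_{\stk S^\marked_{\bar s}/\stk M^{(1-\zeta_3)}_{\bar s}}$ and hence has empty support. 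Over each geometric fiber I am thus reduced to a radicial morphism $\tau^\marked_{\bar s}$ between smooth irreducible varieties of the same dimension four over an algebraically closed field $k$ of characteristic $0$ or $\ge 5$. For such a morphism \'etaleness is equivalent to injectivity of the differential $d\tau^\marked_{\bar s}$ on Zariski tangent spaces: an injection between tangent spaces of equal dimension is an isomorphism, whence $\tau^\marked_{\bar s}$ is \'etale by the Jacobian criterion. (Note that radiciality alone does \emph{not} give this, as the example of Frobenius shows; injectivity of the differential is genuine extra information.)

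It remains to prove the infinitesimal Torelli statement that $d\tau^\marked_{\bar s}$ is injective at every point, and this is the main obstacle. In characteristic zero it is classical: the complex period map realizes $\stk S^\marked_\cx$ as an open subvariety of the ball quotient $\stk M^{(1-\zeta_3)}_\cx$, so its differential is an isomorphism \cite{act02}. For residue characteristic $p\ge 5$ I would identify $d\tau^\marked_{\bar s}$ with the infinitesimal variation of Hodge structure of the associated cubic threefold $Z=F(Y)$. Concretely, the tangent space to $\stk S^\marked$ at $(Y,\Psi)$ is a space of first-order deformations of $Y$, which deform $Z$ and hence its Prym $P=P(Z)$; under the canonical comparison $H^3_\cris(Z)\sriso H^1_\cris(P)$ of Proposition \ref{propdeligne}(c), compatibly with the Hodge filtrations, the differential $d\tau^\marked_{\bar s}$ becomes the Kodaira--Spencer cup-product map of $Z$, restricted to the $\gamma$-eigenspace of deformations coming from cubic surfaces. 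Injectivity of this cup-product is infinitesimal Torelli for cubic threefolds, which reduces to surjectivity of the relevant multiplication map in the Jacobian ring and holds in the characteristics at hand. This shows each $\tau^\marked_{\bar s}$ is \'etale, hence $\tau^\marked$ is \'etale, and being also radicial it is an open immersion onto $\stk N^{(1-\zeta)}$.

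The step I expect to demand the most care is the characteristic-$p$ infinitesimal Torelli input. One must verify that the comparison isomorphism of Proposition \ref{propdeligne}(c) respects the Hodge filtrations and the Kodaira--Spencer maps, so that the differential of the abstract period morphism is genuinely computed by the cup-product on $Z$, and that the Jacobian-ring multiplication governing this cup-product is surjective for all residue characteristics $\ge 5$ permitted by $\calo_E[1/6]$ (the small-prime restrictions being already absorbed into the base). Everything else is formal: the reductions of the first two paragraphs are standard descent through the fiberwise and open-immersion criteria.
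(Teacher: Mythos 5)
Your reduction to ``\'etale $+$ radicial $=$ open immersion'' is a legitimate strategy, but it shifts the entire weight of the lemma onto an infinitesimal Torelli theorem in positive characteristic, and that is exactly the step you do not prove. Two concrete problems. First, the identification of $d\tau^\marked_{\bar s}$ with a Jacobian-ring cup product is not available from what the paper provides: Proposition \ref{propdeligne}(c) is only an isomorphism of \emph{isocrystals} (coefficients tensored with $\rat_p$), and the paper nowhere claims it is compatible with Hodge filtrations or with the Kodaira--Spencer map of the abelian scheme $\calp$, which is what actually computes the differential of a map into $\stk A_5$. Establishing that compatibility integrally in characteristic $p$ is itself a nontrivial piece of crystalline deformation theory. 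Second, even granting that identification, ``infinitesimal Torelli for cubic threefolds restricted to the $\gamma$-eigenspace holds for all residue characteristics $\ge 5$'' is asserted, not argued; the Griffiths--Jacobian-ring description of the Hodge filtration and the relevant Macaulay-type surjectivity need to be verified in characteristic $p$, and the further restriction to the four-dimensional eigenspace of deformations coming from cubic surfaces is an additional statement beyond classical infinitesimal Torelli for the full $10$-dimensional family of threefolds. As written, the characteristic-$p$ case --- the only case in which the lemma is not already known --- rests on an unproved claim.

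The paper avoids all of this with a softer argument that you should compare against: $\tau^\marked\rest{\stk S^\marked}$ is radicial and of finite type, hence quasifinite; it is \emph{proper} because $\tau^\marked$ extends to the projective $\stk S^\marked_\semistable$ and Lemma \ref{lemthetadivisor} guarantees that the fiber product $\stk S^\marked_\semistable \cross_{\stk M^{(1-\zeta_3)}} \stk N^{(1-\zeta_3)}$ is exactly $\stk S^\marked$; a finite morphism onto the smooth (hence locally factorial, Cohen--Macaulay over regular) $\stk N^{(1-\zeta_3)}$ is flat by ``miracle flatness,'' so its degree is locally constant, and that degree is $1$ because it is $1$ in characteristic zero by \cite[Thm.\ 3.17]{act02}. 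This propagates the characteristic-zero Torelli theorem to all fibers without any infinitesimal computation. Your proposal uses neither the compactification nor Lemma \ref{lemthetadivisor}, which is precisely why it is forced to confront \'etaleness head-on. If you want to salvage your route, you must either supply the characteristic-$p$ infinitesimal Torelli argument in full (including the filtered crystalline comparison), or replace it by the degree/flatness argument above.
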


\begin{proof}
Because $\tau^\marked\rest{\stk S^\marked}$ is radicial (Proposition \ref{propperiodsurface}) and of
finite type, it is quasifinite. Since $\stk S_\semistable^\marked$ is
projective, $\tau^\marked$ is projective.  Lemma \ref{lemthetadivisor} shows that
$\stk S_\semistable^\marked \cross_{\stk M^{(1-\zeta)}} \stk
N^{(1-\zeta)}$ is $\stk S^\marked$ (and
not larger), and so $\tau^\marked \rest{\stk S^\marked}$ is
proper, thus finite.

Now, $\stk N^{(1-\zeta)}$ is smooth, and in particular its local rings
are unique factorization domains.  By computing locally on $\stk
N^{(1-\zeta)}$, we see that the finite morphism $\tau^\marked
\rest{\stk S^\marked}$ is necessarily flat \cite[Thm.\
22.6]{matsumuracommring}.  In particular, its  
degree is constant.  Since that degree is known to be one in
characteristic zero \cite[Thm.\ 3.17]{act02}, $\tau^\marked \rest{\stk S^\marked}$ is an
isomorphism onto its image.
\end{proof}

\begin{lemma}
\label{lemsemistablemarkedisom}
The marked Torelli morphism $\tau^\marked: \stk S_\semistable^\marked \ra
\barstk M^{(1-\zeta_3)}$ is an isomorphism.
\end{lemma}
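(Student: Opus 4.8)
The plan is to bootstrap from Lemma \ref{lemsmoothmarkedisom}, which already shows that $\tau^\marked$ maps $\stk S^\marked$ isomorphically onto the open locus $\stk N^{(1-\zeta)} \subset \stk M^{(1-\zeta_3)}$. The goal is to upgrade this to an isomorphism on the compactified, semistable level. The natural strategy is to check that $\tau^\marked: \stk S_\semistable^\marked \ra \barstk M^{(1-\zeta_3)}$ is a proper, quasifinite (hence finite) morphism between normal projective schemes which is generically of degree one, and then invoke Zariski's main theorem: a finite birational morphism to a normal scheme is an isomorphism.

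First I would record that both source and target are normal projective schemes (this is asserted in Section \ref{subsecmodabvars} for $\barstk M^{(1-\zeta_3)}$ and in Section \ref{subsecmodcubics} for $\stk S_\semistable^\marked$). Properness of $\tau^\marked$ is immediate since $\stk S_\semistable^\marked$ is projective. For \textbf{quasifiniteness}, the key input is the stratification-preserving behavior established in Lemma \ref{lemthetadivisor}: the morphism respects the three strata $\stk S^\marked \subsetneq \stk S_\stable^\marked \subsetneq \stk S_\semistable^\marked$, which map into $\stk N^{(1-\zeta)}$, the divisor $\stk D^{(1-\zeta)}$, and the cusps of $\barstk M^{(1-\zeta_3)}$ respectively. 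On the open stratum the morphism is already known to be an isomorphism (Lemma \ref{lemsmoothmarkedisom}), hence quasifinite there; I would then argue quasifiniteness on the boundary divisor and at the cusps separately, using that these loci have the expected low dimension and that $\tau^\marked$ does not collapse them (again leaning on the theta-divisor analysis and the irreducibility of the boundary from Lemma \ref{lemcompdivisor}). Combining properness with quasifiniteness gives finiteness.

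Next I would establish that the \textbf{degree is one}. Since $\tau^\marked\rest{\stk S^\market}$ is already an isomorphism onto the dense open $\stk N^{(1-\zeta)}$, the finite morphism $\tau^\marked$ is generically of degree one, i.e.\ birational. A finite birational morphism onto a normal target is an isomorphism, so it remains only to verify \textbf{surjectivity}: that every point of $\barstk M^{(1-\zeta_3)}$ is hit. The image is closed (properness) and contains the dense open $\stk N^{(1-\zeta)}$, so the only thing to rule out is that the boundary $\barstk M^{(1-\zeta_3)} \setcomp \stk M^{(1-\zeta_3)}$, together with the divisor $\stk D^{(1-\zeta)}$, fails to be covered. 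Here I would use that $\barstk M^{(1-\zeta_3)}\setcomp \stk N^{(1-\zeta)}$ is itself of pure codimension one (or lower-dimensional) and that its irreducible components are accounted for by the images of $\stk S_\stable^\marked\setcomp \stk S^\marked$ and of the cusps; irreducibility from Lemma \ref{lemcompdivisor} makes this bookkeeping clean.

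The \textbf{main obstacle} I anticipate is surjectivity at the boundary, specifically confirming that the horizontal divisor $\stk D^{(1-\zeta)}$ exhausts the complement of $\stk N^{(1-\zeta)}$ inside $\stk M^{(1-\zeta_3)}$ and that nothing in $\barstk M^{(1-\zeta_3)}$ is missed. This is where the arithmetic (as opposed to merely complex-analytic) content bites: one cannot simply cite the complex comparison, since the whole point is to work over $\calo_E[1/6]$. I would handle it by a fiberwise dimension count — both $\stk S_\semistable^\marked$ and $\barstk M^{(1-\zeta_3)}$ have four-dimensional fibers over $\spec \calo_E[1/6]$ — together with properness, so that a finite dominant morphism of irreducible normal projective schemes of equal dimension is automatically surjective, reducing the question to dominance, which follows from the isomorphism on the open stratum. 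Once surjectivity is in hand, normality of the target finishes the argument via Zariski's main theorem.
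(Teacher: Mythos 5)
Your overall architecture (proper $+$ quasifinite $\Rightarrow$ finite; finite $+$ birational onto a normal target $\Rightarrow$ isomorphism) is sound in the abstract, but it hinges on a step you cannot justify with the tools available: quasifiniteness of $\tau^\marked$ along the boundary stratum $\stk S^\marked_\stable \setminus \stk S^\marked$ (and at the cusps) in positive characteristic. The inputs you cite do not deliver this. Lemma \ref{lemthetadivisor} only shows that the image of the nodal locus is \emph{disjoint} from the image of the smooth locus; it says nothing about whether two distinct nodal stable surfaces over a field of characteristic $p$ can have isomorphic marked Pryms, nor about whether the three-dimensional boundary divisor in a special fiber is contracted to something of smaller dimension inside $\stk D^{(1-\zeta)}_{[\idp]}$. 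Over $\cx$ the needed injectivity on the stable locus is part of \cite[Thm.\ 3.17]{act02}, but in characteristic $p$ it is precisely part of what this lemma (via Theorem \ref{thbigdiagram}) is meant to establish, so leaning on ``the theta-divisor analysis'' here is circular. In effect you have reduced the statement to an unproved Torelli-type finiteness assertion for nodal cubic surfaces in positive characteristic.

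The paper's proof is engineered to avoid exactly this point. It first observes that $\tau^\marked$ is birational fiberwise over $\spec \calo_E[1/6]$ (both fibers are four-dimensional and Lemma \ref{lemsmoothmarkedisom} gives an isomorphism over the dense open $\stk N^{(1-\zeta)}$), and is an isomorphism over the generic fiber by \cite[Thm.\ 3.17]{act02}; it then works with the rational inverse $\sigma$ rather than with the fibers of $\tau^\marked$. Since the indeterminacy locus of $\sigma_{[\idp]}$ has positive codimension in each four-dimensional special fiber, the indeterminacy locus of $\sigma$ has codimension at least two in $\barstk M^{(1-\zeta_3)} \cross \spec \calo_{E,(\idp)}$, and normality of the target together with properness of $\tau^\marked$ is then used to conclude that $\sigma$ is everywhere defined, i.e.\ that $\tau^\marked$ is biregular. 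No finiteness of $\tau^\marked$ over the boundary is asserted before the conclusion. To rescue your route you would have to supply the missing quasifiniteness by an independent argument; absent that, you should switch to the rational-inverse argument.
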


\begin{proof}
Each of $\stk S_\semistable^\marked \ra \spec \calo_E[1/6]$
and $\barstk M^{(1-\zeta)} \ra \spec \calo_E[1/6]$ has
four-dimensional geometric fibers.  Lemma \ref{lemsmoothmarkedisom}
implies that, fiberwise on $\spec \calo_E[1/6]$, $\tau^\marked$ is a
birational morphism.  Moreover, over the generic
fiber of $\spec \calo_E[1/6]$, $\tau^\marked$ is an isomorphism
\cite[Thm.\ 3.17]{act02}.  In particular, $\tau^\marked$ is a
birational morphism.

Let $\sigma$ be the
inverse of $\tau^\marked$ in the category of rational maps.  Suppose
$\idp\subset \calo_E[1/6]$ is a nonzero prime ideal.  The locus of
indeterminacy of $\sigma_{[\idp]}$ has positive codimension in
$\barstk M^{(1-\zeta_3)}_{[\idp]}$, and thus the locus of indeterminacy of
$\sigma$ on $\barstk M^{(1-\zeta_3)}\cross \spec \calo_{E,(\idp)}$ has
codimension at least two.  Since $\barstk M^{(1-\zeta_3)}\cross \spec
\calo_{E,(\idp)}$ is normal and $\tau^\marked$ is proper, $\sigma$ is
defined on all of $\barstk M^{(1-\zeta_3)}$ and $\tau^\marked$ is biregular. 
\end{proof}

We now come to the arithmetic analogue of the main
results of \cite{act02}.  In \cite{act02}, Theorem 3.20
states that the stacks $\stk S_{\stable,\cx}$ and $\stk M_\cx$ are
isomorphic, and Theorems 3.17 and 8.2 identify the GIT
compactification $\stk S_{\semistable,\cx}$ with the minimal
compactification $\barstk M_\cx$.  These assertions are actually
specializations of maps of stacks over $\integ[\zeta_3,1/6]$:

\begin{theorem}
\label{thbigdiagram}
  The Torelli morphisms $\tau$ and $\tau^\marked$ give rise
  to the commutative diagram of stacks over $\integ[\zeta_3,1/6]$
\begin{diagram}[tight,height=0.3in]
\stk S^\marked &\rto &&&\stk
N^{(1-\zeta_3)} \\
\dinject&\rdto&&&\diline&\rdto\\
&&\stk S & &\rto&\HonV & \stk N \\
&&\dinject &&\dto&&\dinject\\
\stk S^\marked_\stable &\hLine&\VonH&\rto&\stk M^{(1-\zeta_3)} \\
\dinject&\rdto&&&\diline&\rdto\\
&&\stk S_\stable & &\rto&\HonV & \stk M \\
&&\dinject &&\dto&&\dinject\\
\stk S^\marked_\semistable &\hLine & \VonH &\rto &\barstk
M^{(1-\zeta_3)} \\
&\rdto&&&&\rdto\\
&&\stk S_\semistable & &\rto& & \barstk M
\end{diagram}
in which all vertical arrows are the natural inclusions; all diagonal
arrows are quotients by the action of $\weyl$; all horizontal arrows
are homeomorphisms; the rear horizontal arrows are isomorphisms of
schemes; and the top front horizontal arrow is an isomorphism of stacks.
\end{theorem}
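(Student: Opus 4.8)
The plan is to read the entire diagram off from three structural results already in hand---Lemma \ref{lemsmoothmarkedisom}, Lemma \ref{lemsemistablemarkedisom}, and Lemma \ref{lemthetadivisor}---together with the $\weyl$-equivariance of $\tau^\marked$ recorded in Proposition \ref{propperiodsurface}. First I would dispose of commutativity: each horizontal Torelli arrow is a restriction or extension of the single morphism $\tau^\marked$ and its descent $\tau$, each vertical arrow is the tautological inclusion of one stratum into the next, and each diagonal arrow is the structural quotient by $\weyl$. Compatibility of these is built into the constructions, so every triangle and square commutes by definition.

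Next I would show that the three rear horizontal arrows are isomorphisms of schemes, working from the bottom upward. Lemma \ref{lemsemistablemarkedisom} gives directly that $\tau^\marked\colon \stk S_\semistable^\marked \ra \barstk M^{(1-\zeta_3)}$ is biregular. Being an isomorphism, it is in particular a bijection respecting the open/closed decompositions, so I would invoke Lemma \ref{lemthetadivisor} to match strata: the theta-divisor analysis shows that the semistable-but-not-stable locus lands in the cusp $\barstk M^{(1-\zeta_3)}\setcomp\stk M^{(1-\zeta_3)}$ and that the stable-but-not-smooth locus lands in the boundary divisor $\stk M^{(1-\zeta_3)}\setcomp\stk N^{(1-\zeta_3)}$. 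Restricting the semistable isomorphism over these open subschemes then yields isomorphisms $\stk S_\stable^\marked \sriso \stk M^{(1-\zeta_3)}$ and $\stk S^\marked \sriso \stk N^{(1-\zeta_3)}$; the latter also recovers Lemma \ref{lemsmoothmarkedisom}. This identifies all three rear arrows as isomorphisms of schemes, hence as homeomorphisms.

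For the front arrows I would pass to $\weyl$-quotients. Since $\tau^\marked$ is $\weyl$-equivariant and each rear arrow is an equivariant homeomorphism, the induced map on quotient topological spaces is a homeomorphism; this delivers the homeomorphism assertion for all three front horizontal arrows simultaneously. The sharper claim for the top front arrow---that $\tau$ restricts to an isomorphism of \emph{stacks} $\stk S \sriso \stk N$---follows because over the smooth locus the covers $\stk S^\marked \ra \stk S$ and $\stk N^{(1-\zeta_3)} \ra \stk N$ are honest $\weyl$-torsors: an automorphism fixing a marking (respectively a level-$(1-\zeta_3)$ structure) is the identity. An equivariant isomorphism between the total spaces of two $\weyl$-torsors descends to an isomorphism of the quotient stacks, giving $\stk S \sriso \stk N$.

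The main obstacle is exactly the gap between ``isomorphism of stacks'' and ``homeomorphism'' at the stable and semistable levels. There the maps $\stk S_\stable^\marked \ra \stk S_\stable$ and $\barstk M^{(1-\zeta_3)} \ra \barstk M$ are defined by normalization rather than as torsors and acquire ramification along the boundary, so the equivariant isomorphism of the marked (normalized) spaces descends only to a homeomorphism of the underlying spaces, not to an isomorphism of stack structures. The remaining work is the careful bookkeeping: checking that this descent produces morphisms $\stk S_\stable \ra \stk M$ and $\stk S_\semistable \ra \barstk M$ agreeing with the extensions constructed earlier, and that the front row inherits no identifications beyond those coming from $\weyl$. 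All the substantive geometry, however, is already contained in the cited lemmas.
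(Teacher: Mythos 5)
Your proposal is correct and follows essentially the same route as the paper: the theorem is assembled from Lemmas \ref{lemsmoothmarkedisom} and \ref{lemsemistablemarkedisom} for the top and bottom rear arrows, Lemma \ref{lemthetadivisor} to see that the semistable isomorphism respects the stratification (the paper phrases the middle-row step by noting that the strictly semistable locus and the cusp each consist of a single point per geometric fiber, so $\tau(\stk S_\stable)=\stk M$ exactly), and $\weyl$-equivariance to descend to the front face. Your closing caveat about the front stable and semistable arrows being only homeomorphisms, because the marked covers are torsors only over the smooth locus, matches the paper's (weaker) claims there.
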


\begin{proof}
The assertions for the top (horizontal) square follow from Lemma
\ref{lemsmoothmarkedisom} and $\weyl$-equivariance; those for the
bottom, from Lemma \ref{lemsemistablemarkedisom}.  Every geometric
fiber of $\barstk M\setcomp \stk M$ consists of a single point, as does
each geometric fiber of $\stk S_\semistable \setcomp \stk S_\stable$.
Consequently, the image of $\stk S_\stable$ under $\tau$ is exactly
$\stk M$, which finishes the proof of the theorem.
\end{proof}

\begin{corollary} 
\label{corcomplement}
Let $\stk D$ be the complement of $\stk N$ in $\stk M$.  Then $\stk D$
is an irreducible horizontal divisor.
\end{corollary}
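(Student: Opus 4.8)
The plan is to realize $\stk D$ as the image under $\tau$ of the boundary stratum $\stk S_\stable \setcomp \stk S$ and then transport irreducibility, horizontality, and codimension across the homeomorphism furnished by Theorem \ref{thbigdiagram}. By that theorem $\tau:\stk S_\stable \ra \stk M$ is a homeomorphism onto all of $\stk M$ which carries $\stk S$ onto $\stk N$; since $\stk N$ is moreover open (as recorded after diagram \eqref{diagbig}), $\stk D$ is closed, and one has $\stk D = \stk M \setcomp \stk N = \tau(\stk S_\stable \setcomp \stk S)$, with $\tau$ restricting to a homeomorphism $\stk S_\stable \setcomp \stk S \ra \stk D$.

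First I would check that the source $\stk S_\stable \setcomp \stk S$ is already an irreducible horizontal divisor. Lemma \ref{lemcompdivisor} provides this for $\tilstk S_\stable \setcomp \tilstk S$ inside $\tilstk S_\stable$. Passing to the quotient $\stk S_\stable = \tilstk S_\stable / \SL_4$, its image $\stk S_\stable \setcomp \stk S$ is the continuous image of an irreducible space, hence irreducible, remains of codimension one, and continues to dominate $\spec \calo_E[1/6]$, so it is horizontal. I would then transport these along $\tau$: because $\tau$ is a homeomorphism \emph{over} $\calo_E[1/6]$, the substack $\stk D$ is irreducible and still dominates $\spec \calo_E[1/6]$, hence is horizontal; and codimension, a topological invariant, is preserved, which can also be seen directly from the dimension count that $\stk M$ has four-dimensional fibers over the base (as noted after diagram \eqref{diagbig}) while $\stk D$ inherits the three-dimensional fibers of the divisor $\stk S_\stable \setcomp \stk S$.

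The hard part is that on the stable locus $\tau$ is only a homeomorphism, not an isomorphism of stacks---only the smooth part $\stk S \ra \stk N$ is an isomorphism (Theorem \ref{thbigdiagram})---so one cannot simply pull back a divisorial structure along $\tau$. I would circumvent this by invoking the regularity of $\stk M$: its level cover $\stk M^{(1-\zeta_3)}$ is a smooth scheme, so $\stk M$ is regular, and an irreducible closed substack of codimension one in a regular stack is automatically a prime divisor. This upgrades the purely topological information established above to the assertion that $\stk D$ is an irreducible horizontal divisor, completing the proof.
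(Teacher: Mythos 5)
Your proof is correct, but it takes a genuinely different route from the paper's for the key point, which is why $\stk D$ has codimension one. You obtain this by transporting codimension topologically along the homeomorphism $\tau:\stk S_\stable \ra \stk M$ of Theorem \ref{thbigdiagram}, after first descending Lemma \ref{lemcompdivisor} through the quotient $\tilstk S_\stable \ra \stk S_\stable$; this is legitimate here because everything in sight is a Deligne--Mumford stack of finite type with irreducible four-dimensional fibers, so topological (combinatorial) dimension, which a homeomorphism preserves, agrees fiberwise with algebraic dimension. The paper uses Theorem \ref{thbigdiagram} and Lemma \ref{lemcompdivisor} only to conclude that $\stk D$ is fiberwise closed and irreducible, and then establishes codimension one by a completely different device: since the Prym of a smooth cubic threefold is irreducible as a principally polarized abelian variety, $\stk D$ must contain the locus $\stk D'$ of decomposable ones, realized as the image of the closed immersion $\stk M_{(3,1)} \ra \stk M$, $(X,\iota,\lambda)\mapsto (X_0\cross X,\iota_0\cross\iota,\lambda_0\cross\lambda)$; as $\stk D'$ is a horizontal divisor and $\stk D$ is irreducible, $\stk D=\stk D'$. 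What the paper's argument buys is the explicit identification of $\stk D$ as a special cycle in the sense of Kudla--Rapoport (the remark immediately following the corollary), which your argument does not yield; what yours buys is economy, avoiding the input from \cite{beauville81,clemensgriffiths} on irreducibility of Pryms. Two small remarks: your final appeal to regularity of $\stk M$ is unnecessary, since ``irreducible horizontal divisor'' here just means an integral closed substack of codimension one dominating $\spec\calo_E[1/6]$, which your topological argument already supplies with the reduced induced structure; and when you pass from $\tilstk S_\stable$ to $\stk S_\stable$ it is worth saying explicitly that the quotient map is smooth surjective of constant relative dimension $\dim\SL_4$ (stable points have finite stabilizers), which is what preserves codimension there.
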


In fact, $\stk D$ is a special cycle in the sense of Kudla and
Rapoport \cite{kudlarapoportii}.

\begin{proof}
By Theorem \ref{thbigdiagram} and Lemma \ref{lemcompdivisor}, $\stk D$
is, fiberwise on $\spec \calo_E[1/6]$, a closed irreducible substack
of $\stk M$.  It therefore 
suffices to identify a closed horizontal irreducible substack of $\stk N$
which has codimension one in $\stk M$.  The Prym variety associated to
a smooth cubic threefold, and in particular to the triple cover of
$\proj^3$ ramified along a given smooth cubic surface, is irreducible
as a principally polarized abelian variety
\cite{beauville81,clemensgriffiths}.  Therefore, it suffices to
exhibit a horizontal divisor in $\stk M$ which parametrizes reducible
principally polarized abelian varieties with $\calo_E$-action.  Let
$\stk M_{(3,1)}$ be the moduli stack of abelian fourfolds with
action by $\calo_E$ of signature $(3,1)$, and let
$(X_0,\iota_0,\lambda_0)/\calo_E[1/6]$ be the (unique, principally
polarized) elliptic curve with action by $\calo_E$ of signature
$(1,0)$.  There is a closed immersion of stacks $\stk M_{(3,1)} \ra
\stk M$; on $S$-points, it is given by $(X,\iota,\lambda)\mapsto ((X_0\cross S)\cross X, \iota_0\cross \iota,
\lambda_0\cross\lambda)$.  Let $\stk D'$ be the image of this morphism.
Then every fiber of $\stk D' \ra \calo_E[1/6]$ has dimension $3$, and
$\stk D = \stk D'$ is the sought-for divisor.
\end{proof}

The modular interpretation of $\stk D$ is compatible with the
discussion of intermediate Jacobians of strictly stable cubic
threefolds in \cite{cmlaza09}.  A strictly stable cubic surface $Y$ has
a singularity of type $A_1$.  We have noted above that the cyclic
cubic threefold $F(Y)$ has a singularity of type $A_2$.  Consequently
(at least in characteristic zero), $P(F(Y))$ has an elliptic tail
\cite[Thm.\ 6.3]{cmlaza09}. The presence of the $\zeta_3$ action means
this tail must, in fact, be the elliptic curve
$(X_0,\iota_0,\lambda_0)$ introduced in Corollary \ref{corcomplement}.

Let $(X,\lambda)$ be a principally polarized abelian variety over
$\cx$ of dimension five.  Recall that Casalaina-Martin and Friedman
have shown \cite{cmf05} that $(X,\lambda)$ is the intermediate Jacobian of a smooth
cubic threefold if and only if its theta divisor has a unique
singularity, and that singularity has order three.  Their result
implies an analogous result for abelian varieties with
$\integ[\zeta_3]$-action, which is valid in all characteristics:

\begin{corollary}
Let $(X,\iota,\lambda) \in \stk M(k)$ be a principally polarized
abelian fivefold with action by $\integ[\zeta_3]$.   Then either the
theta divisor of $(X,\lambda)$ has a unique singularity, and that
singularity is of order three, in which case $(X,\lambda) = P(F(Y))$
for a smooth cubic surface $Y$; or the theta divisor has a
positive-dimensional singular locus, in
which case $(X,\lambda) = P(F(Y))$ for a nodal cubic surface $Y$.  In
each case, $Y$ is determined uniquely by $(X,\iota,\lambda)$.
\end{corollary}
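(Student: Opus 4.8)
The plan is to deduce the corollary entirely from the homeomorphism $\tau:\stk S_\stable \ra \stk M$ of Theorem \ref{thbigdiagram}, using the explicit form of the Torelli theorem for cubic threefolds to translate smoothness of $Y$ into the geometry of the theta divisor $\Theta$. Since the classification of stable cubic surfaces into smooth and nodal ones is already matched across $\tau$, what remains is to recognize the two cases \emph{intrinsically} from $\Theta$.

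First I would apply Theorem \ref{thbigdiagram} directly: $\tau$ induces a bijection $\stk S_\stable(k) \ra \stk M(k)$ carrying $\stk S$ onto $\stk N$ and $\stk S_\stable\setcomp\stk S$ onto $\stk D$. Hence every $(X,\iota,\lambda)\in\stk M(k)$ equals $P(F(Y))$, with its canonical $\integ[\zeta_3]$-action and polarization, for a \emph{unique} stable cubic surface $Y$; moreover $Y$ is smooth exactly when $(X,\iota,\lambda)\in\stk N$ and nodal exactly when $(X,\iota,\lambda)\in\stk D$. This uniqueness is precisely the last sentence of the statement.

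Next I would read off the two theta-divisor configurations. If $Y$ is smooth, then $F(Y)$ is a smooth cubic threefold, so Beauville's explicit Torelli theorem \cite{beauville81}---valid in every characteristic other than $2$, as recalled in the proof of Lemma \ref{lemthetadivisor}---shows that $\Theta$ has a unique singularity, and that singularity has order three. If instead $Y$ is nodal, then $F(Y)$ is nodal, and the degeneration argument already carried out in the proof of Lemma \ref{lemthetadivisor} shows that $\Theta$ has either two singular points or a single singularity of order at least four: in characteristic zero one invokes the converse of Casalaina-Martin and Friedman \cite{cmf05}, while in residue characteristic at least $5$---the only positive characteristics occurring over $\calo_E[1/6]$---one lifts $Y$ to a nodal stable surface over a mixed-characteristic discrete valuation ring and specializes the singularity structure, exactly as in that lemma.

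Since these two configurations are mutually exclusive and every $(X,\iota,\lambda)\in\stk M(k)$ realizes exactly one of them, the type of $\Theta$ determines whether $Y$ is smooth or nodal, and the bijectivity of $\tau$ then pins down $Y$ uniquely. The substantive work is thus already contained in Theorem \ref{thbigdiagram} and Lemma \ref{lemthetadivisor}, and the corollary is a formal repackaging of them. The only genuinely delicate point is the positive-characteristic analysis of the singularity of $\Theta$ in the nodal case, where no direct analogue of \cite{cmf05} is available and one must argue by specialization from a mixed-characteristic lift.
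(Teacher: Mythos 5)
Your proposal is correct and follows essentially the same route as the paper: the published proof likewise cites Theorem \ref{thbigdiagram} for the existence and uniqueness of the stable cubic surface $Y$ with $X = P(F(Y))$, and then refers back to the theta-divisor analysis (Beauville's explicit Torelli theorem in the smooth case, the degeneration and mixed-characteristic specialization argument in the nodal case) already developed in the proof of Lemma \ref{lemthetadivisor}. Your additional remarks simply make explicit the case analysis that the paper leaves implicit in that earlier lemma.
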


\begin{proof}
By Theorem \ref{thbigdiagram}, if $(X,\iota,\lambda) \in \stk M(k)$,
then there is a unique $Y \in \stk S_\stable(k)$ such that $X =
P(F(Y))$.  On one hand, if $Y$ is smooth, then we have seen in the
proof of Lemma \ref{lemthetadivisor} that the theta divisor of
$(X,\lambda)$ has a unique singularity, and that singularity is of
order three.  On the other hand, if $Y$ is stable but not smooth, then
$(X,\iota,\lambda)$ is the product of an elliptic curve and an abelian
fourfold (Corollary \ref{corcomplement}), and thus the theta divisor
of $(X,\lambda)$ is singular along a positive-dimensional subvariety.
\end{proof}

\bibliographystyle{hamsplain}
\bibliography{jda}

\end{document}